\documentclass[12pt]{amsart}
\usepackage{amssymb}

\usepackage{tikz}
\newtheorem{Teo}{Theorem}[section]
\newtheorem*{theorem*}{Theorem}
\newtheorem{Lema}[Teo]{Lemma}
\newtheorem{Obs}[Teo]{Remark}

\newtheorem{Exa}[Teo]{Example}
\newtheorem{Def}[Teo]{Definition}

\newtheorem{Cor}[Teo]{Corollary}
\newtheorem{Que}[Teo]{Question}
\newtheorem{Prop}[Teo]{Proposition}

\newcommand{\VR}{\mathcal{O}}
\newtheorem{Con}[Teo]{Conjecture}
\newtheorem{lem-def}[Teo]{Lemma-Definition}

\renewenvironment{proof}{{\bfseries Proof.}}{\qed}
\newcommand{\Lra}{\Longrightarrow}

\newcommand{\N}{\mathbb N}

\def\ars#1{\renewcommand\arraystretch{#1}}

\def\diso{\lower.4ex\hbox{$\downarrow$}\raise.4ex\hbox{\mbox{\scriptsize
$\wr$}}}

\newcommand{\Llr}{\ \Longleftrightarrow\ }

\def\ism{\lower.3ex\hbox{\ars{.08}$\begin{array}{c}\,\to\\\mbox{\tiny $\sim\,$}\end{array}$}}
\def\iso{\ \lower.3ex\hbox{\ars{.08}$\begin{array}{c}\lra\\\mbox{\tiny $\sim\,$}\end{array}$}\ }

\def\lg{l\raise.6ex\hbox to.2em{\hss.\hss}l}

\def\lra{\,\longrightarrow\,}

\def\orb{\hbox to  .3em{$\backslash$}\backslash}

\newcounter{cs}
\stepcounter{cs}
\newcommand{\casos}{\begin{itemize}}
\newcommand{\fcasos}{\end{itemize}\setcounter{cs}{1}}

\newfont{\tit}{cmr12 scaled \magstep3}

\title{Ramification ideals for products of pure subgroups}
\makeatletter
\@namedef{subjclassname@2010}{%
  \textup{2010} Mathematics Subject Classification}
\subjclass[2010]{Primary 13A18; Secondary 12J20, 13J10, 14E15}

\author[Josnei Novacoski]{Josnei Novacoski}
\address{Departamento de Matem\'{a}tica, Instituto de Ciências Matemáticas e de Computação, USP, Avenida Trabalhador São-carlense, 400,  CEP 13566--590, S\~ao Carlos--SP, Brazil}
\email{josnei@icmc.usp.br}

\thanks{During the realization of this project the author was supported by a grant from Funda\c{c}\~ao de Amparo \`a Pesquisa do Estado de S\~ao Paulo (process number 2025/10540-0) and a grant from Conselho Nacional de Desenvolvimento Cient\'ifico e Tecnol\'ogico (process number 317185/2025-0). The author thanks Apan Dutta for valuable discussions during the realization of this project.}

\keywords{Artin-Schreier extension, defect, depth, henselian field, Okutsu sequence, valuation, ramification ideals}

\begin{document}

\begin{abstract}
Let $\mathcal E=(L/K,v)$ be a finite Galois extension of henselian valued
fields. We study the ramification ideals $I_H$, for subgroups
$H\leq {\rm Gal}(L/K)$, when the Galois group is a product of subgroups
$H_i$ such that $L/K_{H_i}$ is pure (depth one). We first recall an explicit
formula for ramification ideals of pure extensions and use it to obtain a
lower bound for the ideals attached to arbitrary subgroups of a product of
pure subgroups. A natural question is whether
every $I_H$ coincides with one of the ideals coming from the pure
factors. We show that this is not true, already for a defectless
extension with Galois group $C_p\times C_p$. The counterexample is a
compositum of two Artin--Schreier extensions with different ramification
breaks; the failure comes from choosing a decomposition which is not compatible
with the ramification filtration. Motivated by this example, we introduce
ramification-adapted decompositions and prove a filtration-theoretic substitute
for the conjectural statement in elementary abelian $p$-extensions. Since every
flag of $\mathbb F_p$-vector spaces admits an adapted basis, every elementary
abelian $p$-extension admits such a decomposition, and every subgroup ideal is
represented by one adapted cyclic factor.  If the adapted factors are pure,
this representation can be written in the distance-set form occurring in the
original conjecture.  We also prove that, for an adapted decomposition
$\mathcal G=H_1\times\cdots\times H_r$, all ramification ideals are principal
if and only if every degree-$p$ extension $L/K_{H_i}$ is defectless. Finally, we discuss the degree-$p^2$ example constructed by Kuhlmann in
\cite[Section 3.5]{Topics}. It is a Galois defect extension presented as a
tower of two degree-$p$ Galois extensions, with defective lower step and
defectless second step. Its Galois group is $C_p\times C_p$, and all
nontrivial subgroup ramification ideals coincide with one principal ideal.
Consequently every basis is ramification-adapted, while principality of all
ramification ideals still does not characterize defectlessness.
\end{abstract}

\maketitle

\section{Introduction}
Let $\mathcal E=(L/K,v)$ be a finite Galois extension of henselian valued
fields and put $\mathcal G={\rm Gal}(L/K)$. For $\sigma\in\mathcal G$ we
consider the fractional ideal
\[
I_\sigma=\left\{c\in L\mid vc\geq
v\left(\frac{\sigma b}{b}-1\right)\mbox{ for some }b\in L^\times\right\},
\]
and for a subgroup $H\leq\mathcal G$ we set
\[
I_H=\bigcup_{\sigma\in H} I_\sigma.
\]
We write $\mathcal M_L$ for the maximal ideal of $\VR_L$.  To distinguish all
subgroup ideals from the actual ramification ideals, we use the notation
\[
\mathcal I(\mathcal E):=
\{I_H\mid \{1\}\neq H\leq\mathcal G\}
\]
and
\[
{\rm Ram}(\mathcal E):=
\{I_H\in\mathcal I(\mathcal E)\mid 0\neq I_H\subseteq\mathcal M_L\}.
\]
Thus the elements of ${\rm Ram}(\mathcal E)$ are precisely the ramification
ideals in the sense of \cite{Topics}.  They form a valuation-theoretic counterpart of higher ramification groups
and provide a useful way to measure wild ramification in arbitrary rank; see
\cite{Topics}.

\paragraph{Classical motivation.}
The starting point is the classical ramification filtration for a finite
Galois extension $L/K$ of complete discretely valued fields.  If $v_L$ is
normalized and $G={\rm Gal}(L/K)$, the lower ramification groups are
\[
G_s=\{\sigma\in G\mid v_L(\sigma(a)-a)\geq s+1
\text{ for every }a\in\mathcal O_L\},\qquad s\geq -1.
\]
They refine the decomposition and inertia filtrations: $G_0$ is the inertia
group and $G_1$ is the wild inertia group.  The successive groups record how
close an automorphism is to the identity on the valuation ring and are among
the basic invariants of wild ramification; see
\cite[Chapter IV, \S\S1--2]{Serre} and \cite[Chapter II, \S10]{Neukirch}.
Their arithmetic significance is already visible in the classical formula
\[
v_L(\mathfrak D_{L/K})=\sum_{s\geq0}(|G_s|-1),
\]
which expresses the exponent of the different in terms of the lower
ramification groups.  Herbrand's reindexing and the resulting upper numbering
provide the functorial form of the filtration used in quotient constructions
and in the conductor formalism; see \cite[Chapter IV, \S\S3--4]{Serre}.

For the purposes of this paper, the important point is that the integer level
$s$ in the classical discrete theory is equivalent to the principal ideal
$(\pi^s)$ of the valuation ring.  Indeed, Lemma
\ref{lem:classical-ideal-filtration} shows that
\[
\mathcal G_{(\pi^s)}=G_s.
\]
Thus Kuhlmann's ideal-indexed groups may be viewed as replacing the integer
scale of classical higher ramification theory by the ordered set of valuation
ideals.  This becomes especially natural for non-discrete valuations and in
higher rank, where there need not be a uniformizer or a single integer-valued
ramification parameter.  The ideal formulation is also well suited to defect
extensions, which are one of the main sources of non-classical ramification
phenomena considered here; see \cite[Section 2.5]{Topics}.

A central role in this paper is played by pure, or depth-one, extensions.
For a simple extension $L=K(\theta)$, purity can be described through
Okutsu sequences or Mac Lane--Vaqui\'e chains. In the language used here,
$L/K$ is pure on $\theta$ if every element of $L$ admits a valuation
computation from a suitable $K$-translate of $\theta$. The ramification
ideals of a pure Galois extension admit an explicit description in terms of
the distance set $D_1(\theta,K)$ and the values of differences of conjugates.
This is recalled and proved in Theorem \ref{pureextens}, following
\cite{Dutta}.

Our main objective is to understand what remains true when $\mathcal G$ is a
product of pure subgroups. Under the hypotheses $(*)$ introduced in
Section~\ref{sec:productpure}, Theorem \ref{mainresultsofar} gives a lower
bound for $I_H$ for every subgroup $H\leq\mathcal G$, and proves equality
when a unique component realizes the minimum. It is natural to ask whether every $I_H$ should always be one of the ideals
$ I_{\gamma_{ij}-\mathbf D_i}$ arising from the pure factors. We show in
Example \ref{exa:counterexample} that this is false. The
counterexample is defectless, has Galois group $C_p\times C_p$, and has two
distinct nontrivial ramification groups. Choosing two cyclic factors
transverse to the deeper ramification group makes both factor ideals equal
to the largest ramification ideal, while the deeper cyclic subgroup has a
strictly smaller ideal.

The counterexample indicates the correct hypothesis. Using Kuhlmann's
ideal-indexed higher ramification groups, we call a decomposition
\[
\mathcal G=H_1\times\cdots\times H_r\simeq C_p^r
\]
ramification-adapted if every ideal-indexed higher ramification group is the product of a
subfamily of the $H_i$. Theorem \ref{teo:adapted} proves that for such a
decomposition every subgroup $H$ satisfies
\[
I_H=I_{H_i}
\]
for a suitable $i$.  If, in addition, the adapted factors are pure and the
data in $(*)$ are fixed, then
\[
I_H=I_{H_i}=I_{\gamma_{ij}-\mathbf D_i}
\]
for suitable $i,j$.  Because the ideal-indexed higher ramification groups form
a flag of $\mathbb F_p$-subspaces, an adapted basis always exists.  Notice,
however, that an order-$p$ factor is not automatically pure; purity is an
additional valuation-theoretic condition.

Section \ref{sec:ASapplications} illustrates these results in the
Artin--Schreier setting.  In particular, for a compositum of linearly
disjoint Artin--Schreier extensions the coordinate ideals attached to the
canonical degree-$p$ factors can be computed directly from Kuhlmann's
prime-degree formulas; when a factor is pure, the same formula also follows
from Theorem \ref{pureextens}.  This gives a convenient formulation of the
independent defect examples considered in \cite{Josnei}.

A second theme concerns the relation between principality of ramification
ideals and defect. One motivation for the study is the following question.
\begin{Que}\label{Quesaboutramif}
When is it true that all ramification ideals of $\mathcal E$ are principal
if and only if $\mathcal E$ is defectless?
\end{Que}
For pure extensions the answer is positive (Corollary
\ref{princvsdeft}). For ramification-adapted elementary abelian extensions
we obtain a precise, but different, criterion: Corollary
\ref{cor:adaptedprincipality} shows that all ramification ideals are
principal if and only if every degree-$p$ extension $L/K_{H_i}$ attached to
an adapted factor is defectless. This condition need not imply that $L/K$
itself is defectless.

Indeed, Section 3.5 of \cite{Topics} constructs a Galois defect extension
$L/K$ of degree $p^2$ as a tower
\[
K\subset L_0\subset L
\]
of two degree-$p$ Galois extensions, where $L_0/K$ has defect and $L/L_0$
is defectless. The full extension is noncyclic, hence
$G\simeq C_p\times C_p$, and the ramification ideals attached to all
nontrivial subgroups coincide with the same principal ideal. Therefore there
is only one nontrivial ideal-indexed higher ramification group, and every basis
of the two-dimensional $\mathbb F_p$-vector space $G$ is
ramification-adapted. This gives a negative answer to Question
\ref{Quesaboutramif} even within the class of extensions admitting a
ramification-adapted decomposition. It also shows that defect can occur in a
lower layer of the tower even when all top degree-$p$ slices associated with
adapted factors are defectless.
\section{Preliminaries}
\subsection{Notation and terminology}\label{notation}
Throughout this paper, we denote by $\N$ the set of all positive and by $\N_0$ the set of all non-negative integers.
  
We will say that the extension $\mathcal E=(L/K,v)$ is \textbf{unibranched} if $v$ is the only extension of $v_{\mid_{K}}$ to $L$. For a field $L$, $K\subseteq L\subseteq \overline K$, we will denote by $L^h$ the \textbf{henselization} of $L$, i.e., the smallest subfield $L'$, $L\subseteq L'\subseteq \overline K$, such that $(\overline K/L',v)$ is unibranched. A valued field $(K,v)$ is \textbf{henselian} if $K=K^h$, or equivalently, if $(\overline K/K,v)$ is unibranched. The \textbf{characteristic exponent of $(K,v)$} is defined as $1$ if ${\rm char}(Kv)=0$ and ${\rm char}(Kv)$ otherwise.

We will regard (in the natural way) $Kv$ as a subfield of $Lv$. The \textbf{defect} of $\mathcal E$ is the number
\[
d(\mathcal E)=\frac{[L^h:K^h]}{[Lv:Kv]\cdot (vL:vK)}.
\]
We will say that $\mathcal E$ is \textbf{defectless} if $d(\mathcal E)=1$. Otherwise, we will say that $\mathcal E$ is a \textbf{defect extension}. We will call $\mathcal E$ an \textbf{immediate extension} if
\[
vK=vL\mbox{ and }Lv=Kv.
\]
For a field $L$, $K\subseteq L\subseteq \overline K$, a subset $S$ of $vL$ will be called a \textbf{final segment} if for every $\gamma\in vL$ we have
\[
\gamma \geq s\in S\Lra \gamma\in S.
\]
We will also use the analogous notion of \textbf{initial segment}. For two final segments $S_1$ and $S_2$ we will say that $S_1\leq S_2$ if $S_2\subseteq S_1$. If $\gamma$ is an element of $\Gamma$, we will denote $\gamma^+$ the final segment $\{\gamma'\mid \gamma\leq \gamma'\}$. Observe that this makes the order on final segments compatible with the usual order on $\Gamma$:
\[
\gamma\leq \lambda\Llr \gamma^+\leq \lambda^+.
\]

For two subsets $S, S'$ of $vL$ we will consider the Minkowski sum $S+S'$, i.e., 
\[
S+S'=\{s+s'\mid s\in S\mbox{ and }s'\in S'\}.
\]
Also, we denote $-S'=\{-s'\mid s'\in S'\}$ and $S-S':=S+(-S')$. For $s\in vL$ and $S'\subseteq vL$ we denote $s-S'=\{s\}-S'$.

For $\theta\in\overline K$ we denote $D_1(\theta,K)=\{v(\theta-c)\mid c\in K\}$. Then $D_1(\theta,K)$ is an initial segment and $-D_1(\theta,K)$ a final segment of $\Gamma$.

By an \textbf{Artin-Schreier (abbreviated by AS) extension} we mean a field extension $L/K$ of degree $p={\rm char}(K)>0$ which is generated by an element
\[
\alpha\in \overline K\setminus K\mbox{ such that }\alpha^p-\alpha\in K.
\]
In this case, we will say that $\alpha$ is an \textbf{AS element} over $K$. 

We will say  that the fields $L_1,\ldots,L_n$, $K\subseteq L_i\subseteq \overline K$, are linearly disjoint over $K$ if for every $i$, $1\leq i\leq n$, we have
\begin{equation}\label{linearlydisj}
L_i\mbox{ and }L_1\cdots L_{i-1}\cdot L_{i+1}\cdots L_n\mbox{ are linearly disjoint over }K.
\end{equation}
If $L_i/K$ is Galois, then \eqref{linearlydisj} is equivalent to
\[
L_i\cap\left(L_1\cdots L_{i-1}\cdot L_{i+1}\cdots L_n\right)=K.
\] 
We will say that  $\alpha_1,\ldots,\alpha_n\in\overline{K}$ are linearly disjoint over $K$ if $K(\alpha_1),\ldots,K(\alpha_n)$ are linearly disjoint over $K$.

\subsection{Higher ramification groups indexed by ideals}\label{subsec:idealramgroups}
As recalled in the Introduction, this formalism extends the classical higher
ramification filtration by replacing the discrete levels $s$ with valuation
ideals.  We use the ideal-indexed formulation from
\cite[Section 2.5]{Topics}. Since $(K,v)$ is henselian, the decomposition
group of $L/K$ is the whole group $\mathcal G$. For an $\VR_L$-ideal $I$
set
\[
\mathcal G_I:=\left\{\sigma\in\mathcal G\ \middle|\
\frac{\sigma b-b}{b}\in I\mbox{ for every }b\in L^\times\right\}.
\]
The groups $\mathcal G_I$ are linearly ordered by inclusion as $I$ varies.
In the present finite Galois setting, the definition of $I_H$ used above
agrees with the ideal generated by all
$\frac{\sigma b}{b}-1$, $\sigma\in H$, $b\in L^\times$.

We record the part of \cite[Proposition 2.6]{Topics} that will be used later.
\begin{Prop}\label{prop:kuhlmann-duality}
Let $H\leq\mathcal G$ and let $I$ be an $\VR_L$-ideal.
\begin{description}
\item[(i)] $\mathcal G_I$ is the largest subgroup $H'$ such that
$I_{H'}\subseteq I$.
\item[(ii)] $I_H$ is the smallest ideal $I'$ such that
$H\subseteq\mathcal G_{I'}$.
\item[(iii)] We have
\[
I_{\mathcal G_{I_H}}=I_H.
\]
Consequently, the ramification ideals are in inclusion-preserving bijection
with the nontrivial higher ramification groups.
\end{description}
\end{Prop}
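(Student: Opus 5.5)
The three assertions form a Galois-type correspondence between subgroups and ideals, so I would set everything up around one observation. For a fixed subgroup $H$ and an $\VR_L$-ideal $I$,
\[
H\subseteq \mathcal G_I \iff \frac{\sigma b}{b}-1\in I \text{ for all } \sigma\in H,\ b\in L^\times \iff I_H\subseteq I .
\]
The proof rests on this equivalence. It uses the remark preceding the proposition: in the finite Galois setting, $I_H$ is the ideal generated by the elements $\frac{\sigma b}{b}-1$. I would verify that remark directly from the definition. If $c$ satisfies $vc\geq v(\frac{\sigma b}{b}-1)$, then $c$ is a multiple in $\VR_L$ of that element, so $c$ lies in the generated ideal. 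Conversely, each generator lies in $I_\sigma$. Moreover, $I_H$ is a union of the $\VR_L$-submodules $I_\sigma$ of $L$, and submodules of the valuation ring's fraction field are totally ordered by inclusion. Hence the union is itself a submodule and equals the generated ideal. The only thing to check is that ideals containing these generators contain $I_H$, and this holds precisely because of the multiple-of-a-generator description.

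For (i), I would first show that $\mathcal G_I$ is a subgroup. Use the identity
\[
\frac{\sigma\tau b}{b}-1=\Bigl(\frac{\sigma(\tau b)}{\tau b}-1\Bigr)\frac{\tau b}{b}+\Bigl(\frac{\tau b}{b}-1\Bigr).
\]
The element $\frac{\tau b}{b}$ is a unit of $\VR_L$ whenever $\frac{\tau b}{b}-1\in I\subseteq\VR_L$, and for general $I$ one can use that $v(\tau b)=vb$ since $L/K$ is unibranched. Inverses are handled by applying this to $\sigma^{-1}$ together with $\sigma$-invariance of $v$. Once $\mathcal G_I$ is a subgroup, the key equivalence gives $I_{\mathcal G_I}\subseteq I$, and any $H'$ with $I_{H'}\subseteq I$ lies in $\mathcal G_I$. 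So $\mathcal G_I$ is the largest such subgroup.

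For (ii), the equivalence says that the ideals $I'$ with $H\subseteq\mathcal G_{I'}$ are exactly those containing $I_H$. Taking $I'=I_H$ itself is admissible, so $I_H$ is the smallest such ideal.

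For (iii), $H\subseteq \mathcal G_{I_H}$ gives $I_H\subseteq I_{\mathcal G_{I_H}}$, while (i) with $I=I_H$ gives $I_{\mathcal G_{I_H}}\subseteq I_H$. For the bijection, I would send a ramification ideal $J=I_H$ to $\mathcal G_J$ and a group $\mathcal G_I$ to $I_{\mathcal G_I}$. Then (iii) and the dual identity $\mathcal G_{I_{\mathcal G_I}}=\mathcal G_I$ show the two maps are mutually inverse. The dual identity follows from (i) and (ii): $\mathcal G_I\subseteq\mathcal G_{I_{\mathcal G_I}}$, and $I_{\mathcal G_I}\subseteq I$ gives the reverse inclusion by monotonicity. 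Both maps preserve inclusion.

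The main obstacle is bookkeeping: matching "nontrivial" groups with ideals in ${\rm Ram}(\mathcal E)$, i.e. ideals that are nonzero and contained in $\mathcal M_L$. Nonzero corresponds to $H\neq1$. Containment in $\mathcal M_L$ has to be matched with restricting to groups inside the wild part $\mathcal G_{\mathcal M_L}$, and I would state the restriction that way.
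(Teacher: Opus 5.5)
Your argument is correct. The paper gives no proof of this proposition; it only records it from \cite[Proposition 2.6]{Topics}. So your proposal is a self-contained substitute rather than a variant of an argument in the paper. Organizing everything around the adjunction $H\subseteq\mathcal G_I\iff I_H\subseteq I$ is the natural route. Once it is established, (i)--(iii) and the mutually inverse maps $J\mapsto\mathcal G_J$, $U\mapsto I_U$ follow formally, as for any Galois connection. The only non-formal inputs are that $\mathcal G_I$ is a subgroup and that $I_H$ is an $\VR_L$-ideal. Both rest on $v(\tau b)=vb$ for $\tau\in\mathcal G$, which holds by henselianity.

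A few points to tighten:

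\begin{itemize}
\item The clause ``$\tau b/b$ is a unit whenever $\tau b/b-1\in I\subseteq\VR_L$'' only holds when $I\subseteq\mathcal M_L$. From $x-1\in\VR_L$ you only get $x\in\VR_L$. Just use $v(\tau b)=vb$ throughout.
\item That same fact gives $v(\sigma b/b-1)\geq 0$, hence $I_H\subseteq\VR_L$. You need this for $I'=I_H$ to be admissible in (ii), so state it explicitly.
\item Closure under inverses is automatic because $\mathcal G$ is finite.
\item For the bookkeeping in the bijection, note that $U\subseteq\mathcal G_{\mathcal M_L}\iff I_U\subseteq\mathcal M_L$ is just your key equivalence with $I=\mathcal M_L$. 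So the restriction you propose matches ${\rm Ram}(\mathcal E)$ exactly.
\item The same maps also give the unrestricted bijection between $\mathcal I(\mathcal E)$ and all nontrivial $\mathcal G_I$. That is the form used later, for example in Theorem \ref{teo:adapted}, where $I_H$ may be $\VR_L$.
\end{itemize}
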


We will also use the following prime-degree consequence of the results in
\cite{Topics}.  It does not require a purity hypothesis.
\begin{Prop}\label{prop:prime-degree-principality}
Let $(E/F,v)$ be a Galois extension of henselian valued fields of prime degree
$p$.  Then the ideal $I_{{\rm Gal}(E/F)}$ is principal if and only if
$(E/F,v)$ is defectless.
\end{Prop}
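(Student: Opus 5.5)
Since $E/F$ is Galois of prime degree $p$ over a henselian field, it is unibranched, so $p=d\cdot e\cdot f$ with $d$ the defect, $e=(vE:vF)$ and $f=[Ev:Fv]$. Because $p$ is prime, exactly one of three cases occurs: $E/F$ is unramified ($f=p$), totally ramified with $e=p$, or immediate with $d=p$. I would treat each case separately using Kuhlmann's prime-degree computations of $I_{{\rm Gal}(E/F)}$ in \cite{Topics}. Since ${\rm Gal}(E/F)$ is cyclic of order $p$, every nontrivial element generates the group. Hence $I_{{\rm Gal}(E/F)}=I_\sigma$ for any generator $\sigma$, and the problem reduces to computing the single ideal generated by the values $v(\sigma b/b-1)$, $b\in E^\times$.

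\textbf{Defectless cases.} Suppose $E/F$ is unramified, or tamely ramified with $e=p$ prime to the residue characteristic. Then $\sigma$ acts nontrivially on the residue field or on a root of unity. Thus $v(\sigma b/b-1)\ge 0$, with equality for a suitable $b$, and $I_\sigma=\VR_E$ is principal. In the wild case $e=p={\rm char}(Fv)$ with $f=1$, I would pick $\eta\in E$ with $v\eta$ generating $vE/vF$. Since $E/F$ is defectless, a standard valuation basis argument shows that every $b\in E$ can be written as $\sum c_i\eta^i$ with $v b=\min_i v(c_i\eta^i)$. From this I would deduce that
\[
\min_{b\in E^\times} v\!\left(\frac{\sigma b}{b}-1\right)
\]
is attained, essentially at $b=\eta$, using that the terms $c_i\eta^i$ have distinct values modulo $vF$. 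An attained minimum means $I_\sigma$ is generated by a single element, so it is principal.

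\textbf{Defect case.} Here $E/F$ is immediate of degree $p$. By Kuhlmann's results, in characteristic $p$ it is Artin--Schreier, and in mixed characteristic it is of Kummer type. In either case $I_\sigma$ is described by the distance of a generator $\theta$ (an Artin--Schreier or Kummer root, suitably normalized) via
\[
\left\{v\!\left(\frac{\sigma b}{b}-1\right)\;\middle|\; b\in E^\times\right\} = v(\sigma\theta-\theta)-D_1(\theta,F)
\]
up to final-segment closure. Since the extension is immediate, $D_1(\theta,F)$ has no maximal element. Consequently $-D_1(\theta,F)$ has no minimum, the final segment defining $I_\sigma$ has no least element, and $I_\sigma$ is not principal.

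The main obstacle is this last step: showing that the set of values $v(\sigma b/b-1)$ really has no minimum when there is defect. To handle it, I would write any $b$ as a polynomial in $\theta$ of degree less than $p$. Then I would use that $\theta$ is a limit of a pseudo-Cauchy sequence in $F$ without limit in $F$ to show that $v(\sigma b/b-1)$ can be strictly decreased by replacing $\theta$ with $\theta-c$ for $c$ closer to $\theta$. For this I rely on the cited prime-degree formulas of \cite{Topics} rather than reproving them.
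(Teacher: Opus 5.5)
\textbf{Gap in the defectless direction.} Your wild case ($e=p=\mathrm{char}(Fv)$) is fine, but the case split contains a false step. You equate $f=p$ with ``unramified'' and conclude $I_\sigma=\VR_E$ because $\sigma$ acts nontrivially on $Ev$. That fails when $\mathrm{char}(Fv)=p$ and $Fv$ is imperfect. Then $Ev/Fv$ can be purely inseparable of degree $p$ while $E/F$ is still defectless.

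For example, take $F=\mathbb F_p(s)((t))$ and $E=F(x)$ with $x^p-x=st^{-p}$. The element $y=tx$ satisfies $y^p-t^{p-1}y=s$, so $\bar y=s^{1/p}$, and $e=d=1$, $f=p$. A purely inseparable extension has no nontrivial automorphisms, so $\sigma$ induces the identity on $Ev$. Since $vE=vF$, every $b\in E^\times$ is $cu$ with $c\in F$ and $u\in\VR_E^\times$. Hence $v(\sigma b/b-1)=v(\sigma u/u-1)>0$. Here in fact $\sigma y/y-1=t/y$ and $I_\sigma=(t)\subsetneq\VR_E$. So this defectless case is not covered, and what you assert about it is false.

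\textbf{Repair.} The fix is easy and makes the case split unnecessary. Let $b_1,\dots,b_p$ be any valuation basis of $E/F$. One exists in every defectless case; here take $1,\eta,\dots,\eta^{p-1}$ with $\eta$ a unit whose residue generates $Ev/Fv$. For $b=\sum_k c_kb_k$ you get $v(\sigma b-b)\geq vb+\min_k v(\sigma b_k/b_k-1)$. So the minimum of $v(\sigma b/b-1)$ is attained at some $b_k$, and $I_\sigma$ is principal.

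\textbf{Defect direction.} A Galois extension of degree $p$ in mixed characteristic is of Kummer type only if $\zeta_p\in F$, so your Artin--Schreier/Kummer dichotomy does not cover all cases. You can bypass it with Lemma \ref{lem:defect-prime-pure} and Corollary \ref{princvsdeft}. Together they give your distance-set formula for a pure generator, with no minimum.

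\textbf{Comparison with the paper.} The paper does no case analysis at all. It cites \cite[Proposition 3.1]{Topics} for principality in the defectless case, adding that an ideal not contained in $\mathcal M_E$ equals $\VR_E$. It cites \cite[Corollary 3.11]{Topics} for nonprincipality when there is defect.
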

\begin{proof}
If $(E/F,v)$ is defectless, Proposition 3.1 of \cite{Topics} shows that every
ramification ideal is principal; if the subgroup ideal is not contained in the
maximal ideal, it is the whole valuation ring and is principal as well.
Conversely, if $(E/F,v)$ has defect, then it is unibranched because $F$ is
henselian, and \cite[Corollary 3.11]{Topics} shows that its unique
ramification ideal, namely $I_{{\rm Gal}(E/F)}$, is nonprincipal.
\end{proof}

In the defect case, however, prime degree does imply purity.  We record the
standard consequence of Kaplansky's theory of pseudo-convergent sequences that
will be useful below.
\begin{Lema}[Kaplansky]\label{lem:defect-prime-pure}
Let $(E/F,v)$ be a finite extension of henselian valued fields of prime degree
$p$. If $(E/F,v)$ has defect, then $E/F$ is immediate and pure.
\end{Lema}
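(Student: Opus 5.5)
Write $p$ for the degree and fix $\theta\in E\setminus F$, so that $E=F(\theta)$. I would prove immediacy first and then derive purity from it through Kaplansky's theory.

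\emph{Immediacy.} Since $F$ is henselian, the extension is unibranched, and the fundamental equality reads
\[
p=[E:F]=d\cdot (vE:vF)\cdot [Ev:Fv].
\]
The defect $d$ is a power of the characteristic exponent. Having defect means $d>1$, and since $p$ is prime this forces $d=p$. Hence $(vE:vF)=1$ and $[Ev:Fv]=1$, so the extension is immediate.

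\emph{Purity.} Because $E/F$ is immediate, the element $\theta$ has no best approximation in $F$. In other words, $D_1(\theta,F)$ has no maximum. Choose a pseudo-convergent sequence $(a_\nu)$ in $F$ with limit $\theta$ and no limit in $F$; the values $v(\theta-a_\nu)$ are cofinal in $D_1(\theta,F)$.

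Next consider a polynomial $f\in F[x]$ with $\deg f<p$. The minimal polynomial of $\theta$ has degree $p$, and every polynomial of smaller degree fails to have $\theta$ as a root. So, by minimality of the degree, $(a_\nu)$ is of transcendental type relative to all polynomials of degree $<p$. Kaplansky's lemma then gives that $v(f(a_\nu))$ is eventually constant. Moreover, this constant equals $v f(\theta)$: write
\[
f(\theta)-f(a_\nu)=\sum_{k\ge1} f_k(a_\nu)(\theta-a_\nu)^k,
\]
where the $f_k$ are the Hasse derivatives. Each term on the right eventually has value exceeding $v f(a_\nu)$, again by the Kaplansky estimates for polynomials of degree $<p$.

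Every element of $E$ has the form $f(\theta)$ with $\deg f<p$. Therefore
\[
v f(\theta)=v f(a_\nu)\quad\text{for large }\nu.
\]
In terms of a $F$-translate $\theta-a_\nu$, expanding $f$ around $a_\nu$ yields a term-by-term valuation computation: the value of $f(\theta)$ is the minimum of the values of the terms $f_k(a_\nu)(\theta-a_\nu)^k$. This is exactly the definition of purity (depth one) on $\theta$. Equivalently, in Mac Lane--Vaquié language, the key polynomial $x-a$ with $a$ ranging over $F$ determines the valuation on $F[x]_{<p}$.

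\emph{Main obstacle.} The delicate step is matching "pure" in the paper's sense with the Kaplansky statement, namely that the minimum is attained uniformly by a single translate. I would handle this by showing the expansion is eventually in "standard position": the values $v(\theta-a_\nu)$ strictly increase, and Kaplansky's lemma ensures that only one term $k$ dominates the expansion for large $\nu$. This uniformity is the point that needs the most care.
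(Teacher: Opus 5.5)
Your immediacy argument is the same as the paper's: the fundamental equality together with primality of $p$. Your expansion along $(a_\nu)$ is the computation the paper carries out in Proposition \ref{prop:DuttaNovacoskiDefect}. There, however, eventual stability of values for polynomials of degree $<p$ is a \emph{consequence} of purity, whereas you need it as \emph{input}.

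The genuine gap is the sentence claiming that, ``by minimality of the degree,'' $(a_\nu)$ is of transcendental type for all polynomials of degree $<p$. Knowing $f(\theta)\neq 0$ only rules out $\theta$ itself. If some other root $\eta$ of $f$ is also a pseudo-limit of $(a_\nu)$, then $v f(a_\nu)$ is eventually strictly increasing. In that case both $vf(\theta)=vf(a_\nu)$ and the single-translate computation fail. This step of yours uses neither the primality of $p$ nor the defect hypothesis. The same words would apply to an element of degree $p^2$ lying closer to some element of degree $p$ than to every element of $F$, and there the conclusion is false. The assertion that no element of degree $<p$ approximates $\theta$ better than $F$ does is exactly the length-one Okutsu condition the paper draws from Kaplansky's theory (compare $v(\pi-z)\leq v(\pi-0)$ in the proof of Lemma \ref{lem:totally-ramified-prime-pure}). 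It is the whole content of purity here, not a formality.

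To close the gap, suppose $\eta\in\overline F$ with $[F(\eta):F]<p$ is a pseudo-limit of $(a_\nu)$.
\begin{itemize}
\item Then $v(\theta-\eta)>v(\theta-a_\nu)$ for all large $\nu$. By cofinality, $v(\eta-c)=v(\theta-c)$ for every $c\in F$, so $D_1(\eta,F)=D_1(\theta,F)$ has no maximum.
\item On the other hand, $d=p$ forces the residue characteristic to be $p$. Hence the defect of $F(\eta)/F$ is a power of $p$ not exceeding $[F(\eta):F]<p$, so it equals $1$.
\item A defectless finite extension of a henselian field has a valuation basis containing $1$. The coefficient of $1$ in the expansion of $\eta$ is then a best approximation of $\eta$ in $F$, which is a contradiction.
\end{itemize}
Alternatively, you can use Kaplansky's theorem that a polynomial of minimal degree with eventually increasing values along $(a_\nu)$ yields an immediate extension of that degree; over a henselian field such an extension has $p$-power degree.

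Every $x\in\overline F$ is either a pseudo-limit of $(a_\nu)$ or has $v(a_\nu-x)$ eventually constant. So factoring $f$ and its Hasse derivatives over $\overline F$ gives the eventual constancy you need.

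With that in place, the step you flagged as the main obstacle is routine. The term values are $c_k+k\gamma_\nu$, with $c_k$ eventually constant and $\gamma_\nu$ strictly increasing, so two of them with different $k$ coincide for at most one $\nu$. Moreover, the argument above proves the Okutsu-length-one condition directly, which is how the paper formally defines purity. You therefore do not need to identify purity with the informal ``valuation computation from a translate'' description.
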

\begin{proof}
Since $F$ is henselian, the extension is unibranched.  The fundamental equality
and the primality of $[E:F]=p$ show that nontrivial defect forces
\[
e(E/F)=f(E/F)=1,
\qquad d(E/F)=p,
\]
so $E/F$ is immediate.  By Kaplansky's theory of pseudo-convergent sequences,
an algebraic element generating a finite immediate extension is a pseudo-limit
of a pseudo-convergent sequence in the base field; in the prime-degree case the
corresponding approximation has only the degree-$1$ level before the final
degree-$p$ element.  Equivalently, the associated Okutsu sequence has length
one.  Thus the generator has depth one, i.e. it is pure; see
\cite{Kaplansky} and the discussion of purity via Okutsu sequences in
\cite{Dutta}.
\end{proof}

Prime degree without a defect hypothesis does not by itself imply depth one.
For the defectless counterexample below, we shall use the following special
situation in which purity is also automatic.
\begin{Lema}\label{lem:totally-ramified-prime-pure}
Let $(E/F,v)$ be a totally ramified extension of henselian discretely valued
fields of prime degree $p$.  Then $E/F$ has a pure generator.
\end{Lema}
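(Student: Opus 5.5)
The plan is to take a uniformizer $\theta=\pi_E$ of $E$ as the generator and to show directly that $E/F$ is pure on $\theta$. Here purity means that every element of $E$ has its valuation computed from a suitable $F$-translate of $\theta$, i.e. the Okutsu sequence of $\theta$ has length one. Since $E/F$ is totally ramified of degree $p$ between henselian discretely valued fields, we have $e(E/F)=p$, $Ev=Fv$ and $vE=\frac1p vF$ (normalizing $vF=\mathbb Z$). The standard Eisenstein argument then applies: the minimal polynomial $f$ of $\pi_E$ over $F$ is Eisenstein, and $E=F(\pi_E)$, with $\VR_E=\VR_F[\pi_E]$.

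The key computation runs as follows. Take $g=\sum_{i<p}a_i\theta^i\in F[x]$ of degree $<p$, with $a_i\in F$. The values $v(a_i\theta^i)=v(a_i)+i/p$ lie in pairwise distinct cosets of $vF$ in $vE$ for $i=0,\dots,p-1$, so they are pairwise distinct. Hence
\[
v\bigl(g(\theta)\bigr)=\min_i\bigl(v(a_i)+iv(\theta)\bigr).
\]
So the Gauss (monomial) valuation $v_{\theta,\gamma}$ with $\gamma=v(\theta)=1/p$ already computes the value of every element of $E$, and in particular it is an extension of $v$ to $F[x]_{<p}$. Next I check that $D_1(\theta,F)$ has maximum $1/p$. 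For $c\in F$ we get $v(\theta-c)=\min(1/p,v(c))\le 1/p$, since $1/p\notin vF$. So the first MacLane--Vaqui\'e key polynomial $x$ (equivalently $x-c$ with $c=0$) already attains the maximal depth $\delta=\max D_1(\theta,F)$, and the next key polynomial must be the minimal polynomial $f$, of degree $p$. This is exactly the statement that the Okutsu sequence of $\theta$ is $\{x\}$, of length one, so $\theta$ is a pure generator in the sense used in \cite{Dutta}.

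The main obstacle is matching this with whatever precise definition of ``pure'' the paper adopts. If it is phrased via Okutsu sequences, the argument above suffices. If it is phrased as ``every $b\in E$ has $v(b)$ determined by $v(\theta-c)$ for some $c\in F$'', I would also verify that condition from the displayed min-formula. Every $b\in E^\times$ can be written as $b=u\pi_F^m\theta^i$ with $u\in\VR_E^\times$. Its value equals $v(c'(\theta-c)^i)$ for suitable $c,c'\in F$ (take $c=0$, $c'=\pi_F^m$), which is the required valuation computation from an $F$-translate of $\theta$. Note that discreteness is used essentially: it makes the value group extension generated by $v(\theta)$ and forces $\max D_1$ to be attained.
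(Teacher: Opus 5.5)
Your proof is correct, but it reaches purity by a different route from the paper's.

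\textbf{Your route.} You first establish a valuation-basis statement. Since $v(a_i\theta^i)\equiv i/p \pmod{\mathbb Z}$, the monomial valuation $[x,1/p]$ computes $v(g(\theta))$ for every $g\in F[x]$ of degree $<p$. Moreover $\max D_1(\theta,F)=1/p$ is attained at $c=0$. You then invoke the Mac Lane--Vaqui\'e/Okutsu dictionary: the chain is $[x,1/p]$ followed directly by $[f,\infty]$, so the Okutsu sequence has length one.

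\textbf{The paper's route.} The paper never passes through polynomials. It checks the Okutsu axioms for $A_0=\{\pi\}$, $A_1=\{0\}$ directly from Dutta--Novacoski's definition. For (OS2), take any $z\in\overline F$ with $[F(z):F]<p$. Then $e(F(z)/F)\le [F(z):F]<p$, so $vz\neq 1/p$, and hence $v(\pi-z)=\min\{v\pi,vz\}\le v(\pi-0)$. (OS4) is just the existence of $\max v(\pi-F)$.

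\textbf{What each buys.} Your approach gives the explicit valuation basis $1,\theta,\dots,\theta^{p-1}$ and the shape of the Mac Lane--Vaqui\'e chain. That is exactly the form in which purity is used in the defectless case of Proposition \ref{LemmaDutta}. The cost is that your last step relies on the general equivalence between key-polynomial chains and Okutsu sequences. The paper's check is elementary and self-contained against the definition it actually uses.

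You can close that dependence with your own data in one line. If $[F(z):F]=m<p$ and $v(\theta-z)>1/p=v\theta$, then $vz=1/p$. This is impossible because $(vF(z):vF)\le m<p$, and it is precisely the paper's argument.

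One caution about your final paragraph: the check of the informal ``valuation computation from a translate'' formulation is vacuous. Every value in $vE=vF+\mathbb Z\,v\theta$ is trivially of the form $v(c'\theta^i)$. Purity concerns distances from $\theta$ to elements of lower degree, so you should rest the proof on the Okutsu-sequence formulation, not on that check.
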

\begin{proof}
Normalize $vF=\mathbb Z$ and let $\pi$ be a uniformizer of $E$.  Since
$E/F$ is totally ramified of degree $p$, we have $v\pi=1/p$.  The element
$\pi$ generates $E/F$: indeed, the ramification index of $F(\pi)/F$ is
divisible by $p$, hence $[F(\pi):F]\geq p$.

Now take $z\in\overline F$ with $[F(z):F]<p$.  The ramification index of
$F(z)/F$ is at most $[F(z):F]$, so $v z\neq1/p$.  Consequently,
\[
v(\pi-z)=\min\{v\pi,vz\}\leq v\pi=v(\pi-0).
\]
The same argument with $z=c\in F$ shows that $v(\pi-F)$ has maximum
$v\pi$.  Hence, in the notation of \cite[Definition 2.2]{Dutta}, the
sequence
\[
A_0=\{\pi\},\qquad A_1=\{0\}
\]
satisfies (OS1)--(OS4): (OS2) is the preceding inequality, (OS3) is
immediate, and (OS4) follows from the existence of the maximum of
$v(\pi-F)$.  It is therefore an Okutsu sequence of length one.  By
\cite[Definition 2.9]{Dutta}, $\pi$ is pure over $F$, and so $E/F$ is pure.
\end{proof}

We will use the following comparison only in the discretely valued
counterexample below. It gives the direct bridge between Kuhlmann's
ideal-indexed groups and the classical lower ramification filtration.
\begin{Lema}\label{lem:classical-ideal-filtration}
Assume that $L$ is discretely valued, normalize $v_L$ by $v_L(\pi)=1$ for a
uniformizer $\pi$, and let $(G_s)_{s\geq 0}$ be the classical lower
ramification filtration of a finite Galois extension $L/K$. Then, for every
integer $s\geq 1$,
\[
\mathcal G_{(\pi^s)}=G_s.
\]
\end{Lema}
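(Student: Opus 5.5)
We show that for every $\sigma\in\mathcal G$ and every integer $s\geq 1$,
\[
\frac{\sigma b-b}{b}\in(\pi^s)\ \text{for all } b\in L^\times
\quad\Longleftrightarrow\quad
v_L(\sigma a-a)\geq s+1\ \text{for all } a\in\VR_L .
\]
Here the classical filtration is taken in the sense that $G_s$ (for $s\geq 1$) lies inside the inertia group $G_0$, so $\sigma$ acts trivially on the residue field. We also use that $K$ is henselian and complete with respect to a discrete valuation, which is the setting where $G_s$ is defined.

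First we reduce the condition on the left to a single test element. Multiplicativity gives
\[
\frac{\sigma(bc)}{bc}-1=\Bigl(\frac{\sigma b}{b}-1\Bigr)\frac{\sigma c}{c}+\Bigl(\frac{\sigma c}{c}-1\Bigr).
\]
Moreover $\sigma c/c$ is a unit, since $v_L\circ\sigma=v_L$. Hence the set of $b$ with $v_L(\sigma b/b-1)\geq s$ is a subgroup of $L^\times$. Since $L^\times=\pi^{\mathbb Z}\cdot\VR_L^\times$, it suffices to test $b=\pi$ and $b=u$ a unit.

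For a unit $u$ we have $v_L(\sigma u/u-1)=v_L(\sigma u-u)$. For $b=\pi$ we have $v_L(\sigma\pi/\pi-1)=v_L(\sigma\pi-\pi)-1$. Thus the left-hand condition is equivalent to
\[
v_L(\sigma u-u)\geq s\ \text{for all units } u
\qquad\text{and}\qquad
v_L(\sigma\pi-\pi)\geq s+1 .
\]

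Next we pass to the classical description. By Serre, Chapter IV, Lemma 1 (using that $\VR_L=\VR_K[x]$ for a monogenic generator $x$), $\sigma\in G_0$ lies in $G_s$ if and only if $v_L(\sigma\pi-\pi)\geq s+1$. Serre's remark there phrases this as $\sigma\pi/\pi\equiv1 \bmod \pi^{s}$, which is exactly the $b=\pi$ condition.

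It remains to show that for $\sigma\in G_s$ with $s\geq1$ the unit condition holds automatically. Write a unit $u=\sum a_i\pi^i$ with $a_i$ in a system of representatives. In the totally ramified part, $\VR_L=\VR_{K'}[\pi]$, where $K'$ is the maximal unramified subextension, which is fixed by $G_0$. For $a\in\VR_{K'}$ we get
\[
\sigma(a\pi^i)-a\pi^i=a(\sigma(\pi)^i-\pi^i),
\]
and this has valuation at least $v_L(\sigma\pi-\pi)\geq s+1$. The only delicate point is the unramified part, and it is handled by $G_0$ fixing $K'$. Conversely, the classical condition on all of $\VR_L$ gives $v_L(\sigma u-u)\geq s+1\geq s$ and $v_L(\sigma\pi-\pi)\geq s+1$, so $\sigma\in\mathcal G_{(\pi^s)}$.

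For the direction $\mathcal G_{(\pi^s)}\subseteq G_s$ we must first check that $\sigma$ lies in $G_0$. The unit condition with $s\geq1$ gives $v_L(\sigma u-u)\geq1$, so $\sigma$ acts trivially on the residue field, hence $\sigma\in G_0$. The $\pi$-condition then gives $\sigma\in G_s$.

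The main subtlety is this last point: ensuring that the valuation-ideal definition, which ranges over all $b\in L^\times$ rather than over integers, shifts the index by exactly one for $b=\pi$. It also requires the hypothesis $s\geq1$, which is what forces $\sigma$ into inertia. The reduction to the monogenic generator relies on the residue extension being separable, as in Serre's standing hypothesis, which we assume implicitly in speaking of the classical filtration.
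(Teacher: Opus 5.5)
Your proof is correct and follows essentially the same route as the paper. You write $b=\pi^a u$, use the defining condition of $G_s$ on $\pi$ and on units for $G_s\subseteq\mathcal G_{(\pi^s)}$, and apply the uniformizer criterion to $b=\pi$ for the reverse inclusion. Your explicit check that the unit condition with $s\geq1$ forces $\sigma\in G_0$, which the uniformizer criterion requires, fills in a step the paper's converse leaves implicit. Your expansion $u=\sum a_i\pi^i$ is redundant, since $\sigma\in G_s$ already gives $v_L(\sigma u-u)\geq s+1$ by definition.
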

\begin{proof}
For $\rho\in G_s$, the usual uniformizer criterion gives
\[
v_L(\rho\pi-\pi)\geq s+1,
\qquad\text{hence}\qquad
v_L\left(\frac{\rho\pi}{\pi}-1\right)\geq s.
\]
Moreover, if $u\in\VR_L^\times$, then
$v_L(\rho u-u)\geq s+1$. Writing an arbitrary $b\in L^\times$ as
$b=\pi^a u$, one obtains
\[
v_L\left(\frac{\rho b}{b}-1\right)\geq s,
\]
so $\rho\in\mathcal G_{(\pi^s)}$. Conversely, if
$\rho\in\mathcal G_{(\pi^s)}$, applying the defining condition to $b=\pi$
gives $v_L(\rho\pi-\pi)\geq s+1$, and the uniformizer criterion yields
$\rho\in G_s$.
\end{proof}

\subsection{A discrete ramification formula for $C_p^2$}\label{subsec:cp2-discrete}

The counterexample in Section~\ref{sec:productpure} only requires a comparison
between the ramification breaks of a $C_p^2$-extension and those of its
degree-$p$ quotients.  In this special situation one can work entirely with the classical lower
numbering and a finite quotient calculation.

Let $L/K$ be a finite totally ramified Galois extension of complete discretely
valued fields with
\[
G={\rm Gal}(L/K)\simeq C_p\times C_p.
\]
Fix a uniformizer $\pi$ of $L$ and put
\[
i_G(\rho):=v_L(\rho\pi-\pi),\qquad i_G(1):=\infty.
\]
For every subgroup $H\leq G$ of order $p$, let $t_H$ be the unique lower
ramification break of the degree-$p$ extension $L^H/K$.  Also let
\[
r_H:=i_G(\rho)-1\qquad(1\neq\rho\in H).
\]
This is well defined: if $1\neq\rho,\rho'\in H$, then
$\langle\rho\rangle=\langle\rho'\rangle=H$, so $\rho$ and $\rho'$ belong to
exactly the same lower ramification groups.

\begin{Lema}\label{lem:cp2-discrete}
With the notation above, for every subgroup $H\leq G$ of order $p$ we have
\begin{equation}\label{eq:cp2-discrete-first}
p(t_H+1)=\sum_{\substack{J\leq G,\ |J|=p\\J\neq H}}(r_J+1).
\end{equation}
Consequently,
\begin{equation}\label{eq:cp2-discrete-second}
r_H+1=\sum_{\substack{J\leq G\\|J|=p}}(t_J+1)-p(t_H+1).
\end{equation}
\end{Lema}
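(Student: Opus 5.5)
The plan is to derive \eqref{eq:cp2-discrete-first} from the classical behaviour of the function $i_G$ under passage to quotients, and then obtain \eqref{eq:cp2-discrete-second} by linear algebra over the $p+1$ subgroups of order $p$.

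First recall the setting. Since $L/K$ is totally ramified, $\pi$ generates $\VR_L$ over $\VR_K$. Hence $i_G(\rho)=v_L(\rho\pi-\pi)$ is the usual function $i_G$ of \cite[Chapter IV, \S1]{Serre}, and $\rho\in G_s$ if and only if $i_G(\rho)\geq s+1$.

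Fix $H$ of order $p$ and put $E=L^H$, so that $G/H\simeq{\rm Gal}(E/K)$ and $e_{L/E}=p$. I will invoke Herbrand's quotient formula \cite[Chapter IV, Proposition 3]{Serre}: for $\sigma'\in G/H$,
\[
i_{G/H}(\sigma')=\frac1{e_{L/E}}\sum_{\sigma\mapsto\sigma'} i_G(\sigma).
\]
The left-hand side is computed with $v_E$ normalized and a uniformizer of $E$. Since $E/K$ is totally ramified of degree $p$, its only nontrivial lower ramification groups are $G/H$ up to level $t_H$. Thus $i_{G/H}(\sigma')=t_H+1$ for every $\sigma'\neq1$. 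Taking $\sigma'\neq1$ gives
\[
p(t_H+1)=\sum_{\sigma\in\sigma_0H}i_G(\sigma),
\]
where $\sigma_0H$ is any nontrivial coset.

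The key combinatorial step is to identify this coset sum. Every nontrivial element of $G\simeq C_p\times C_p$ lies in exactly one subgroup of order $p$. The coset $\sigma_0H$ consists of $p$ nontrivial elements, none of which lies in $H$. Two distinct elements of the coset cannot lie in the same $J$, since their quotient would lie in $J\cap H=\{1\}$. There are exactly $p$ subgroups $J\neq H$ of order $p$, so the coset meets each such $J$ in exactly one element, at which $i_G$ takes the value $r_J+1$. This gives \eqref{eq:cp2-discrete-first}.

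For \eqref{eq:cp2-discrete-second}, I will sum \eqref{eq:cp2-discrete-first} over all $p+1$ subgroups $H$. Each $J$ is omitted exactly once, so the right-hand side becomes $p\sum_J(r_J+1)$. Dividing by $p$ yields
\[
\sum_J(r_J+1)=\sum_J(t_J+1).
\]
Subtracting \eqref{eq:cp2-discrete-first} for the given $H$ from this identity gives \eqref{eq:cp2-discrete-second}.

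The only delicate point is the normalization in Herbrand's formula. One must confirm that $i_{G/H}$ is taken with respect to $v_E$, and that the factor $1/e_{L/E}=1/p$ correctly accounts for the change of normalization. This is exactly Serre's statement, and here both $L/K$ and $E/K$ are totally ramified, so no residue-field issues arise.
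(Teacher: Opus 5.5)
Your proposal is correct and follows the paper's proof essentially step by step. Both arguments use Serre's quotient formula with $e_{L/E}=p$ and the fact that a nontrivial coset of $H$ meets each order-$p$ subgroup $J\neq H$ in exactly one element; both then sum the first identity over the $p+1$ subgroups to get $\sum_J(r_J+1)=\sum_J(t_J+1)$. The only differences are cosmetic: you justify the coset count by injectivity plus counting instead of via the isomorphism $J\to G/H$, and you state explicitly why $v_L(\rho\pi-\pi)$ is Serre's $i_G$ (total ramification), which the paper leaves implicit.
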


\begin{proof}
Fix $H$ and choose $\sigma\in G\setminus H$.  Write
$\overline\sigma$ for its image in $G/H={\rm Gal}(L^H/K)$.  The standard
quotient valuation formula gives
\begin{equation}\label{eq:quotient-index-average}
i_{G/H}(\overline\sigma)
 =\frac1p\sum_{\tau\in H}i_G(\sigma\tau);
\end{equation}
see \cite[Chapter IV, \S3]{Serre}.  Since $G/H$ has order $p$, every
nontrivial element has the same ramification index, namely
\[
i_{G/H}(\overline\sigma)=t_H+1.
\]

The coset $\sigma H$ contains exactly one nonzero element of each order-$p$
subgroup $J\neq H$.  Indeed, for every such $J$ the quotient map
$J\longrightarrow G/H$ is an isomorphism.  Therefore
\[
\sum_{\tau\in H}i_G(\sigma\tau)
 =\sum_{\substack{J\leq G,\ |J|=p\\J\neq H}}(r_J+1),
\]
and \eqref{eq:cp2-discrete-first} follows from
\eqref{eq:quotient-index-average}.

Set
\[
R:=\sum_{\substack{J\leq G\\|J|=p}}(r_J+1).
\]
Equation \eqref{eq:cp2-discrete-first} gives
\[
r_H+1=R-p(t_H+1).
\]
Summing this equality over the $p+1$ subgroups of order $p$ yields
\[
R=(p+1)R-p\sum_{\substack{J\leq G\\|J|=p}}(t_J+1),
\]
whence
\[
R=\sum_{\substack{J\leq G\\|J|=p}}(t_J+1).
\]
Substitution gives \eqref{eq:cp2-discrete-second}.
\end{proof}

\section{Computing ramification ideals for pure extensions}

For a final segment $S\subseteq vL$, we use the notation
\[
I_S:=\{0\}\cup\{c\in L^\times\mid vc\in S\}.
\]
Thus $I_S$ is the fractional ideal of $\VR_L$ determined by $S$.

The main result of this section gives a complete description of the
ramification ideals of a pure Galois extension.  The defect case is based on
\cite[Theorem 8.14]{Dutta}; we include the argument in our notation, since it
is also useful later in the paper.

\begin{Teo}\label{pureextens}
Let $(L/K,v)$ be a finite Galois extension of henselian valued fields which is
pure on $\theta$.  For every nontrivial subgroup
$H\leq {\rm Gal}(L/K)$, set
\[
\beta_H:=\min_{\sigma\in H\setminus\{1\}}v(\sigma(\theta)-\theta)
\]
and
\begin{equation}\label{defSHtheta}
S_{H,\theta}:=\beta_H-D_1(\theta,K).
\end{equation}
Then
\begin{equation}\label{equationforramific}
I_H=I_{S_{H,\theta}}.
\end{equation}
Equivalently,
\[
S_{H,\theta}
=\min_{\sigma\in H\setminus\{1\}}
\{v(\sigma(\theta)-\theta)-D_1(\theta,K)\}.
\]
\end{Teo}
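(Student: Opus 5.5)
We reduce to the cyclic case $H=\langle\sigma\rangle$ and then pass to the union. The final statement is then formal. Since $I_H=\bigcup_{\sigma\in H}I_\sigma$, and for final segments the union of the $I_{S}$ is $I$ of the union, it suffices to prove, for each $\sigma\neq 1$,
\[
I_\sigma=I_{S_\sigma},\qquad S_\sigma:=v(\sigma\theta-\theta)-D_1(\theta,K).
\]
Indeed, the union of the final segments $v(\sigma\theta-\theta)-D_1(\theta,K)$ over $\sigma\in H\setminus\{1\}$ is the one attached to the smallest value, namely $\beta_H-D_1(\theta,K)$.

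\textbf{The inclusion $I_{S_\sigma}\subseteq I_\sigma$.} This follows from explicit test elements. For $c\in K$ take $b=\theta-c\in L^\times$. Since $\sigma c=c$,
\[
\frac{\sigma b}{b}-1=\frac{\sigma\theta-\theta}{\theta-c},
\]
whose value is $v(\sigma\theta-\theta)-v(\theta-c)$. Hence every element of value in $S_\sigma$ lies in $I_\sigma$, because $I_\sigma$ is by definition upward closed in value.

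\textbf{The inclusion $I_\sigma\subseteq I_{S_\sigma}$.} This is the substantive direction. We must show that for every $b\in L^\times$ there is $c\in K$ with
\[
v\!\left(\frac{\sigma b}{b}-1\right)\ \ge\ v(\sigma\theta-\theta)-v(\theta-c).
\]
Here we use purity, in the form stated in the introduction: every $b\in L$ admits a valuation computation from a suitable $K$-translate $\theta-c$. Concretely, we would use the Okutsu-sequence description of depth one from \cite{Dutta}. Writing $b=f(\theta)$ with $\deg f<[L:K]$ and expanding in powers of $\theta-c$, purity should give some $c$ such that $v(f(\theta))=\min_k v(a_k(\theta-c)^k)$, with the minimum attained at a unique term. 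Moreover, the same should hold after replacing $\theta$ by any conjugate $\theta'$ that is at least as close to $\theta$. We then compute
\[
\sigma b-b=f(\sigma\theta)-f(\theta)=\sum_k a_k\bigl((\sigma\theta-c)^k-(\theta-c)^k\bigr).
\]
Each term factors through $(\sigma\theta-\theta)$, leaving a sum of monomials $(\sigma\theta-c)^i(\theta-c)^{k-1-i}$. We need $v(\sigma\theta-c)\ge v(\theta-c)$, or at least that the relevant values agree. This holds after choosing $c$ so that $v(\theta-c)\le v(\sigma\theta-\theta)$, which is possible when $\sigma\theta-\theta$ is not too close to $\theta-K$. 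The other situation is the defectless case, where $D_1$ has a maximum and that maximum is compared with the conjugate distances. The result is the termwise bound
\[
v(\sigma b-b)\ \ge\ v(\sigma\theta-\theta)+\min_k v\bigl(a_k(\theta-c)^{k}\bigr)-v(\theta-c)=v(\sigma\theta-\theta)+vb-v(\theta-c).
\]

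\textbf{The main obstacle.} The hard part is this last step: extracting from the definition of purity a $c$ that simultaneously computes $v(b)$ and satisfies $v(\theta-c)\le v(\sigma\theta-\theta)$, so that the Taylor-type estimate holds. I would first follow \cite[Theorem 8.14]{Dutta} in the defect (immediate) case. There $D_1(\theta,K)$ has no maximum and lies below every $v(\sigma\theta-\theta)$ by Kaplansky's theory (Lemma \ref{lem:defect-prime-pure}). Hence any $c$ approximating $\theta$ well enough computes $v(b)$, and the above $c$ exists. In the defectless case, $D_1(\theta,K)$ has a maximum $\delta=v(\theta-c_0)$. The claim then reduces to showing that $v(\sigma b/b-1)\ge v(\sigma\theta-\theta)-\delta$, and purity gives $v(b)$ computed by the $(\theta-c_0)$-expansion. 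I expect the comparison $\delta\le v(\sigma\theta-\theta)$ to follow from $\theta$ being an Okutsu-optimal approximation (OS4).
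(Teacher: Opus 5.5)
Your plan is correct and is essentially the paper's proof. The inclusion $I_{S_{H,\theta}}\subseteq I_H$ comes from the test elements $b=\theta-c$. The reverse inclusion comes from expanding $b$ in powers of some $\theta-c$, $c\in K$, whose expansion computes $vb$: in the defectless case this is a valuation basis at $c_0$ with $v(\theta-c_0)=\max D_1(\theta,K)$, and in the defect case it is Kaplansky's eventual stability along a pseudo-convergent sequence $(z_\nu)$. In that case your expansion at $c=z_\nu$ is a slightly streamlined version of the paper's Hasse--Schmidt computation at $\theta$.

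What you call the main obstacle is not one. Since $K$ is henselian, $v\circ\sigma=v$ on $L$, so $v(\sigma\theta-c)=v(\theta-c)$, and hence $v(\theta-c)\le v(\sigma\theta-\theta)$, for every $c\in K$. This is exactly how the paper obtains the monomial bound. No appeal to (OS4) or to Lemma~\ref{lem:defect-prime-pure} is needed, and that lemma only concerns prime degree in any case.
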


We first record explicitly the part of Dutta--Novacoski which is needed in
the defect case.

\begin{Prop}\label{prop:DuttaNovacoskiDefect}
Assume that $(L/K,v)$ is a Galois defect extension of henselian valued fields
and that $L=K(\theta)$ with $\theta$ pure over $K$.  Let
$(z_\nu)_{\nu<\lambda}$ be a pseudo-convergent sequence in $K$, without a
limit in $K$, having $\theta$ as a limit.  Then, for every subgroup
$H\leq {\rm Gal}(L/K)$,
\begin{equation}\label{eq:DuttaNovacoskiGenerators}
 I_H=
 \left(
 \frac{\sigma\theta-\theta}{\theta-z_\nu}
 \;\middle|\;
 \sigma\in H,\ \nu<\lambda
 \right).
\end{equation}
Moreover, the set
$\{v(\theta-z_\nu)\mid\nu<\lambda\}$ is cofinal in
$D_1(\theta,K)$.
\end{Prop}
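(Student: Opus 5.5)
We plan to prove Proposition~\ref{prop:DuttaNovacoskiDefect} in two stages. First we settle the cofinality assertion, since it is needed to convert generators into final segments. Then we prove the two inclusions in \eqref{eq:DuttaNovacoskiGenerators}.

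\textbf{Cofinality.} Since $(L/K,v)$ has defect and $\theta$ is pure, the Kaplansky picture of Lemma~\ref{lem:defect-prime-pure} applies: $\theta$ is a limit of a pseudo-convergent sequence $(z_\nu)$ in $K$ with no limit in $K$. Let $c\in K$. If $v(\theta-c)\geq v(\theta-z_\nu)$ held for every $\nu$, then
\[
v(c-z_\nu)\geq \min\{v(c-\theta),v(\theta-z_\nu)\}=v(\theta-z_\nu)
\]
would make $c$ a limit of $(z_\nu)$ in $K$, which is impossible. Hence some $\nu$ satisfies $v(\theta-c)<v(\theta-z_\nu)$, and the values $v(\theta-z_\nu)$ are cofinal in $D_1(\theta,K)$. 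In particular $D_1(\theta,K)$ has no maximum.

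\textbf{Inclusion $\supseteq$.} Fix $\sigma\in H$ and $\nu<\lambda$, and put $b=\theta-z_\nu$. Since $z_\nu\in K$, we have $\sigma b-b=\sigma\theta-\theta$, so
\[
\frac{\sigma\theta-\theta}{\theta-z_\nu}=\frac{\sigma b}{b}-1\in I_\sigma\subseteq I_H .
\]

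\textbf{Inclusion $\subseteq$.} We must show that for every $b\in L^\times$ and $\sigma\in H$ there is some $\nu$ with
\[
v\Bigl(\frac{\sigma b}{b}-1\Bigr)\geq v(\sigma\theta-\theta)-v(\theta-z_\nu).
\]
This is where purity enters. By purity, $b=f(\theta)$ with $\deg f<[L:K]$ admits a valuation computation from a $K$-translate of $\theta$. Concretely, for $\nu$ large, writing $f(\theta)=\sum_k \partial_kf(z_\nu)(\theta-z_\nu)^k$, a single term dominates strictly. This is Kaplansky's lemma: for a pseudo-convergent sequence of transcendental or large algebraic type, the values $v(\partial_k f(z_\nu))+k\,v(\theta-z_\nu)$ eventually have a unique minimum.

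Applying $\sigma$ and expanding $(\sigma\theta-z_\nu)^k=((\theta-z_\nu)+(\sigma\theta-\theta))^k$, each change in a summand has value at least
\[
v(\sigma\theta-\theta)-v(\theta-z_\nu)+\bigl(\text{value of that summand}\bigr).
\]
Here we use $v(\sigma\theta-\theta)>v(\theta-z_\nu)$ for all $\nu$, which holds because $\sigma\theta$ is also a limit of $(z_\nu)$: it is a root of the same minimal polynomial, and in the immediate/defect case $v(\sigma\theta-\theta)$ exceeds the whole set $D_1(\theta,K)$. Dividing by the dominant term gives the required bound for $\sigma b/b-1$.

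The main obstacle is making the ``unique dominant term'' statement rigorous, uniformly in $b$. We would take it from Kaplansky's Lemma~8 as it is encoded in the purity definition of \cite{Dutta}, rather than reprove it.
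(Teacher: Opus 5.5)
Your proposal is correct and follows essentially the paper's route. The inclusion $\supseteq$ is identical. The inclusion $\subseteq$ rests, as in the paper, on two ingredients: Kaplansky-type eventual stability for polynomials of degree $<[L:K]$ along $(z_\nu)$, and the inequality $v(\sigma\theta-\theta)>v(\theta-z_\nu)$. That inequality is best justified by $v(\sigma\theta-z_\nu)=v(\sigma(\theta-z_\nu))=v(\theta-z_\nu)$, using invariance of $v$ under ${\rm Gal}(L/K)$ over the henselian field $K$, together with the strict increase of $v(\theta-z_\nu)$.

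The differences are minor. You derive cofinality directly from the absence of a limit in $K$, rather than citing \cite[Remark 2.10]{Dutta}. By centering the Taylor expansion at $z_\nu$ instead of at $\theta$, you only need $vf(\theta)=vf(z_\nu)$ and the unique minimum among the terms $\partial_kf(z_\nu)(\theta-z_\nu)^k$, $k\geq1$; you do not need the stabilization of $v\partial_kf$. No uniformity in $b$ is required either, since $\nu$ may depend on $b$.
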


\begin{proof}
The existence of such a pseudo-convergent sequence is one of the standard
consequences of purity in the defect case; see
\cite[Remark 2.10]{Dutta}.  The same remark shows that, for every $a\in K$,
there is $\nu<\lambda$ such that
\[
v(\theta-a)\leq v(\theta-z_\nu),
\]
which proves the asserted cofinality.

We prove \eqref{eq:DuttaNovacoskiGenerators}, following the argument of
\cite[Theorem 8.14]{Dutta}.  Take $b\in L^\times$.  Write
$b=f(\theta)$, where $f\in K[X]$ and $\deg f<[L:K]$.  Let
$\partial_i f$ denote the Hasse--Schmidt derivatives, so that
\begin{equation}\label{eq:HSexpansion}
f(X)-f(z)=\sum_{i=1}^{d}\partial_i f(z)(X-z)^i,
\qquad d=\deg f.
\end{equation}
For $\sigma\in{\rm Gal}(L/K)$, we consequently have
\begin{align}
\sigma b-b
 &=\sum_{i=1}^{d}\partial_i f(\theta)
       (\sigma\theta-\theta)^i,\label{eq:sigmabHS}\\
f(\theta)-f(z_\nu)
 &=\sum_{i=1}^{d}\partial_i f(z_\nu)
       (\theta-z_\nu)^i.\label{eq:fa-fzHS}
\end{align}
Since $\deg(\partial_i f)<[L:K]$, the valuation behaviour of polynomials of
smaller degree along the pseudo-convergent sequence is eventually stable.
Thus there are $h\in\{1,\ldots,d\}$ and $\nu_0<\lambda$ such that, whenever
$\nu_0\leq\nu<\lambda$,
\begin{align}
vf(\theta)&=vf(z_\nu),\label{eq:DN-C1}\\
v\partial_i f(\theta)&=v\partial_i f(z_\nu)
       \quad(1\leq i\leq d),\label{eq:DN-C2}\\
v\!\left(\partial_h f(z_\nu)(\theta-z_\nu)^h\right)
 &<v\!\left(\partial_i f(z_\nu)(\theta-z_\nu)^i\right)
       \quad(i\neq h).\label{eq:DN-C3}
\end{align}
These are precisely the eventual properties used in
\cite[Theorem 8.14]{Dutta}.

Fix $\sigma\in{\rm Gal}(L/K)$ and choose
$j\in\{1,\ldots,d\}$ for which
\[
v\!\left(\partial_j f(\theta)(\sigma\theta-\theta)^j\right)
\]
is minimal.  From \eqref{eq:sigmabHS}--\eqref{eq:DN-C3}, for every
$\nu\geq\nu_0$ we obtain
\begin{align*}
v\left(\frac{\sigma b}{b}-1\right)
&=v(\sigma b-b)-vb\\
&\geq
 v\!\left(\partial_j f(\theta)(\sigma\theta-\theta)^j\right)-vf(\theta)\\
&=
 v\!\left(\partial_j f(z_\nu)(\theta-z_\nu)^j\right)-vf(\theta)
 +jv\!\left(\frac{\sigma\theta-\theta}{\theta-z_\nu}\right)\\
&\geq
 v\!\left(\partial_j f(z_\nu)(\theta-z_\nu)^j\right)
 -v(f(\theta)-f(z_\nu))
 +jv\!\left(\frac{\sigma\theta-\theta}{\theta-z_\nu}\right)\\
&\geq
 jv\!\left(\frac{\sigma\theta-\theta}{\theta-z_\nu}\right).
\end{align*}
Here the second inequality uses \eqref{eq:DN-C1}, whereas the last one uses
the uniqueness of the minimal term in \eqref{eq:fa-fzHS}, guaranteed by
\eqref{eq:DN-C3}.

Since $z_\nu\in K$ and the extension of $v$ to $L$ is invariant under the
Galois group,
\[
v(\sigma\theta-z_\nu)=v(\theta-z_\nu).
\]
Hence
\[
v(\sigma\theta-\theta)\geq v(\theta-z_\nu).
\]
As the values $v(\theta-z_\nu)$ are strictly increasing, the inequality is
in fact strict for every $\nu<\lambda$.  Therefore
\[
v\left(\frac{\sigma\theta-\theta}{\theta-z_\nu}\right)>0,
\]
and the preceding estimate yields
\begin{equation}\label{eq:DNkeyineq}
v\left(\frac{\sigma b}{b}-1\right)
\geq
v\left(\frac{\sigma\theta-\theta}{\theta-z_\nu}\right)
\qquad(\nu\geq\nu_0).
\end{equation}
Because every tail of a pseudo-convergent sequence is cofinal, the right-hand
side of \eqref{eq:DuttaNovacoskiGenerators} therefore generates an ideal
which contains all the generators $\sigma b/b-1$ of $I_H$.

For the reverse containment, simply observe that
\[
\frac{\sigma\theta-\theta}{\theta-z_\nu}
=
\frac{\sigma(\theta-z_\nu)}{\theta-z_\nu}-1\in I_H
\]
for every $\sigma\in H$ and $\nu<\lambda$.  This proves
\eqref{eq:DuttaNovacoskiGenerators}.
\end{proof}

We next extract the valuation estimate which will be used throughout the
rest of the paper.

\begin{Prop}\label{LemmaDutta}
Assume that $(L/K,v)$ is a pure Galois extension of henselian valued fields
and take a pure generator $\theta$ of $L/K$.  Then, for every
$\sigma\in {\rm Gal}(L/K)$ and $b\in L^\times$,
\begin{equation}\label{eq:DuttaLowerBound}
v\left(\frac{\sigma b}{b}-1\right)
\in v(\sigma(\theta)-\theta)-D_1(\theta,K).
\end{equation}
Equivalently, the principal ideal generated by $\sigma b/b-1$ is contained
in $I_{v(\sigma(\theta)-\theta)-D_1(\theta,K)}$.
\end{Prop}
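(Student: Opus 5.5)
The plan is to reduce the statement to a Taylor-expansion estimate around a well-chosen constant $c\in K$, with $v(\theta-c)\in D_1(\theta,K)$. Since $v(\theta-c)\in D_1(\theta,K)$, it suffices to prove
\[
v\left(\frac{\sigma b}{b}-1\right)\geq v(\sigma\theta-\theta)-v(\theta-c).
\]
This suffices because the set $v(\sigma(\theta)-\theta)-D_1(\theta,K)$ is a final segment. If $\sigma=1$ there is nothing to prove, since the left-hand side is $\infty$. So fix $\sigma\neq 1$, put $\beta=v(\sigma\theta-\theta)$, and write $b=f(\theta)$ with $f\in K[X]$, $\deg f<n=[L:K]$.

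In the defect case I will simply invoke Proposition \ref{prop:DuttaNovacoskiDefect}. Its inequality \eqref{eq:DNkeyineq} gives
\[
v(\sigma b/b-1)\geq \beta-v(\theta-z_\nu)
\]
for all large $\nu$. Since $z_\nu\in K$, we have $v(\theta-z_\nu)\in D_1(\theta,K)$, and this is exactly the claim with $c=z_\nu$.

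In the defectless case I will use purity in its Okutsu form. Here $D_1(\theta,K)$ has a maximum $\delta=v(\theta-c)$, and purity (depth one) yields the following for every $g\in K[X]$ of degree $<n$:
\[
v\,g(\theta)=\min_i\bigl(v\,\partial_i g(c)+i\delta\bigr).
\]
This says that $\theta-c$ computes valuations of polynomials of degree less than $n$ monomially. Extracting this cleanly from \cite[Definition 2.9]{Dutta} is the step I expect to need the most care. Granting it, I apply the formula to $g=\partial_j f$, which has degree $<n$, and use $\partial_k\partial_j=\binom{j+k}{j}\partial_{j+k}$. This gives
\[
v\,\partial_j f(\theta)\geq \min_{m\geq j}\bigl(v\,\partial_m f(c)+(m-j)\delta\bigr)\geq v f(\theta)-j\delta.
\]

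Finally, I expand
\[
\sigma b-b=f(\sigma\theta)-f(\theta)=\sum_{j\geq1}\partial_j f(\theta)(\sigma\theta-\theta)^j.
\]
By the previous bound, each term satisfies
\[
v\bigl(\partial_j f(\theta)(\sigma\theta-\theta)^j\bigr)-vf(\theta)\geq j(\beta-\delta).
\]
Because $c\in K$ and $v$ is Galois invariant, $v(\sigma\theta-c)=v(\theta-c)$, so $\beta\geq\delta$. Hence $j(\beta-\delta)\geq\beta-\delta$ for all $j\geq1$. Summing over $j$ gives $v(\sigma b/b-1)\geq\beta-\delta$, which is the required membership.
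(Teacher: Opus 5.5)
Your proof is correct and follows essentially the same route as the paper: the same defect/defectless split, the same appeal to \eqref{eq:DNkeyineq} in the defect case, and in the defectless case the same key input, namely that the powers of $\theta-c$ with $v(\theta-c)=\max D_1(\theta,K)$ form a valuation basis (the paper also takes this directly from purity). The only difference is the final computation in the defectless case. You Taylor-expand $f(\sigma\theta)-f(\theta)$ at $\theta$ and bound $v\,\partial_j f(\theta)$, mirroring the defect-case argument, whereas the paper expands $b=\sum_i a_i(\theta-a)^i$ and uses $\frac{\sigma(\theta-a)^i}{(\theta-a)^i}-1=\bigl(\frac{\sigma(\theta-a)}{\theta-a}-1\bigr)\sum_{j<i}\frac{\sigma(\theta-a)^j}{(\theta-a)^j}$, which avoids Hasse derivatives.
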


\begin{proof}
We distinguish the defectless and defect cases.

Suppose first that $(L/K,v)$ is defectless.  Since $\theta$ is pure, there is
$a\in K$ such that
\[
\delta:=v(\theta-a)=\max D_1(\theta,K)
\]
and the powers of $\theta-a$ form a valuation basis.  Thus, writing
\[
b=a_0+a_1(\theta-a)+\cdots+a_r(\theta-a)^r,
\qquad a_i\in K,
\]
we have
\[
vb=\min_{0\leq i\leq r}v(a_i(\theta-a)^i).
\]
For $i\geq1$,
\[
\frac{\sigma(\theta-a)^i}{(\theta-a)^i}-1
=
\left(\frac{\sigma(\theta-a)}{\theta-a}-1\right)
\sum_{j=0}^{i-1}
\frac{\sigma(\theta-a)^j}{(\theta-a)^j}.
\]
Every summand in the last sum has value zero, so the sum has nonnegative
value.  Consequently,
\[
v\left(\frac{\sigma(\theta-a)^i}{(\theta-a)^i}-1\right)
\geq v(\sigma\theta-\theta)-\delta.
\]
Using the valuation-basis expansion of $b$ gives
\[
v\left(\frac{\sigma b}{b}-1\right)
\geq v(\sigma\theta-\theta)-\delta.
\]
Since $\delta$ is the largest element of $D_1(\theta,K)$, this is exactly
\eqref{eq:DuttaLowerBound}.

Now suppose that $(L/K,v)$ has defect.  Choose the pseudo-convergent sequence
from Proposition \ref{prop:DuttaNovacoskiDefect}.  The proof of that
proposition gives, for every $b\in L^\times$ and all sufficiently large
$\nu$,
\[
v\left(\frac{\sigma b}{b}-1\right)
\geq v(\sigma\theta-\theta)-v(\theta-z_\nu).
\]
The values $v(\theta-z_\nu)$ are cofinal in $D_1(\theta,K)$.  Hence the left
hand side belongs to the final segment
$v(\sigma\theta-\theta)-D_1(\theta,K)$, proving
\eqref{eq:DuttaLowerBound}.
\end{proof}

\begin{proof}
We now prove Theorem \ref{pureextens}. 
Take $H\leq{\rm Gal}(L/K)$ nontrivial.  Proposition \ref{LemmaDutta} gives,
for every $\sigma\in H\setminus\{1\}$ and $b\in L^\times$,
\[
\frac{\sigma b}{b}-1\in I_{S_{H,\theta}},
\]
so
\[
I_H\subseteq I_{S_{H,\theta}}.
\]

For the reverse inclusion, choose $\sigma_H\in H\setminus\{1\}$ such that
\[
v(\sigma_H\theta-\theta)=\beta_H.
\]
If $L/K$ is defectless, choose $a\in K$ with
$v(\theta-a)=\max D_1(\theta,K)$.  Then
\[
v\left(
\frac{\sigma_H(\theta-a)}{\theta-a}-1
\right)
=\beta_H-v(\theta-a),
\]
which is the least element of the final segment $S_{H,\theta}$.  Hence the
single displayed element generates $I_{S_{H,\theta}}$, and therefore
$I_{S_{H,\theta}}\subseteq I_H$.

If $L/K$ has defect, Proposition \ref{prop:DuttaNovacoskiDefect} yields
\[
I_H=
\left(
\frac{\sigma\theta-\theta}{\theta-z_\nu}
\;\middle|\;
\sigma\in H,\ \nu<\lambda
\right).
\]
For fixed $\sigma$, the values of these generators are
\[
v(\sigma\theta-\theta)-v(\theta-z_\nu).
\]
Since $\{v(\theta-z_\nu)\}$ is cofinal in $D_1(\theta,K)$, these values form
a coinitial subset of the final segment
$v(\sigma\theta-\theta)-D_1(\theta,K)$.  Consequently the generators with
$\sigma=\sigma_H$ alone generate $I_{S_{H,\theta}}$.  Thus again
$I_{S_{H,\theta}}\subseteq I_H$.

Combining the two inclusions proves \eqref{equationforramific}.
\end{proof}

\begin{Obs}\label{rmkdefeclte}
It is well-known that an extension of valued fields, pure on $\theta$, is
defectless if and only if $D_1(\theta,K)$ has a largest element.
\end{Obs}

\begin{Cor}\label{princvsdeft}
Assume that $\mathcal E$ is pure. Then $\mathcal{E}$ is defectless if and
only if every ramification ideal of $\mathcal E$ is principal.
\end{Cor}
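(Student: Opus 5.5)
The plan is to read off principality directly from the explicit description of Theorem \ref{pureextens}, combined with Remark \ref{rmkdefeclte}. Fix a pure generator $\theta$ of $L/K$. For every nontrivial subgroup $H$, Theorem \ref{pureextens} gives $I_H=I_{S_{H,\theta}}$ with $S_{H,\theta}=\beta_H-D_1(\theta,K)$. An ideal $I_S$ attached to a final segment $S$ is principal exactly when $S$ has a least element; if $S=\gamma^+$, a generator is any $c$ with $vc=\gamma$. Since $\beta_H-D_1(\theta,K)$ is a translate of $-D_1(\theta,K)$, it has a least element if and only if $D_1(\theta,K)$ has a largest element. Remark \ref{rmkdefeclte} identifies the latter condition with defectlessness of $\mathcal E$.

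For the direction ``defectless $\Rightarrow$ principal'', suppose $\mathcal E$ is defectless. Then $\delta=\max D_1(\theta,K)$ exists, and every $I_H$, $H\neq\{1\}$, equals $I_{(\beta_H-\delta)^+}$, which is principal. In particular, every element of ${\rm Ram}(\mathcal E)$ is principal. Alternatively, one can cite \cite[Proposition 3.1]{Topics} directly, as in the proof of Proposition \ref{prop:prime-degree-principality}.

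For the converse, suppose $\mathcal E$ has defect. Then $D_1(\theta,K)$ has no maximum, so each $I_H$ for $H\neq\{1\}$ is nonprincipal. It remains to check that at least one such ideal actually lies in ${\rm Ram}(\mathcal E)$, that is, $0\neq I_H\subseteq\mathcal M_L$; otherwise the statement ``every ramification ideal is principal'' would hold vacuously. Here the main point is positivity. Take $H={\rm Gal}(L/K)$ (nontrivial, since a defect extension is proper). Then $I_H\neq0$ because $\beta_H<\infty$. For the inclusion $I_H\subseteq\mathcal M_L$, we use that every value of the final segment is positive. Indeed, for any $\sigma\neq1$ and $c\in K$, we have $v(\sigma\theta-c)=v(\theta-c)$, so $v(\sigma\theta-\theta)\geq v(\theta-c)$. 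Since $D_1(\theta,K)$ has no maximum, we can pick $c'$ with $v(\theta-c')>v(\theta-c)$, which gives $v(\sigma\theta-\theta)>v(\theta-c)$ strictly. This is the same argument as in the proof of Proposition \ref{prop:DuttaNovacoskiDefect}. Hence $S_{H,\theta}\subseteq vL_{>0}$ and $I_H\subseteq\mathcal M_L$. So $I_H$ is a nonprincipal ramification ideal, and the equivalence follows.

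The only delicate step is this last verification: one must make sure that in the defect case the nonprincipal ideals are genuine ramification ideals, i.e. that they are contained in $\mathcal M_L$. The strict inequality coming from the absence of a maximum in $D_1(\theta,K)$ handles it. Everything else is a direct translation of Theorem \ref{pureextens} via Remark \ref{rmkdefeclte}.
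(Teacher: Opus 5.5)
Your proposal is correct and follows the paper's route: the paper's proof is just the combination of Theorem~\ref{pureextens} with Remark~\ref{rmkdefeclte}, as in your first paragraph. Your additional check in the defect case is a correct detail that the paper leaves implicit: you show that $I_{\mathcal G}\neq 0$ and $I_{\mathcal G}\subseteq\mathcal M_L$, using the strict inequality $v(\sigma\theta-\theta)>v(\theta-c)$ for all $c\in K$, so a nonprincipal ideal really lies in ${\rm Ram}(\mathcal E)$ and the equivalence is not vacuous.
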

\begin{proof}
It follows directly from Theorem \ref{pureextens} and Remark
\ref{rmkdefeclte}.
\end{proof}

\begin{Obs}
If $(K,v)$ is henselian and $\mathcal E$ is a defect pure extension, then
\[
d(L/K,v)=[L:K].
\]
In particular, $\mathcal E$ is immediate.
\end{Obs}

\section{Products of pure subgroups}\label{sec:productpure}
We now study the case where $\mathcal G=H_1\cdots H_r$ for pure subgroups
$H_i$.
\begin{Def}
A subgroup $H$ of $\mathcal G$ is said to be \textbf{pure} if
$(L/K_H,v)$ is a pure extension, where $K_H$ is the fixed field of $H$.
\end{Def}

\begin{Obs}\label{obs:prime-not-pure}
If $\mathcal G\simeq C_p^r$ and $H_i$ is a one-dimensional subgroup, then
$[L:K_{H_i}]=p$.  This group-theoretic fact does \emph{not} by itself imply
that $H_i$ is pure: the degrees occurring in an Okutsu sequence are degrees
of approximating elements and need not be degrees of intermediate fields.
Accordingly, purity of a factor will always be stated explicitly whenever the
distance-set formula of Theorem \ref{pureextens} is used.
\end{Obs}

\begin{Que}
How can one characterize the extensions $\mathcal E$ for which
$\mathcal G$ is a product of pure subgroups?
\end{Que}

\begin{Cor}\label{Corimport}
Assume that $(L/K,v)$ is Galois, take a pure subgroup $H$ of $\mathcal G$
and a pure generator $\theta$ of $L/K_H$. Set
$\mathbf D=D_1(\theta,K_H)$. Then
\[
v\left(\frac{\sigma b}b-1\right)\geq
v(\sigma(\theta)-\theta)-\mathbf D
\]
for every $\sigma\in H$ and $b\in L^\times$.
\end{Cor}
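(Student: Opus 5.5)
This is a direct application of Proposition \ref{LemmaDutta} to the extension $(L/K_H,v)$ in place of $(L/K,v)$. Our plan is to first check the hypotheses, then apply the proposition, then translate membership in a final segment into an inequality.

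\textbf{Hypotheses.} Since $(L/K,v)$ is a finite Galois extension of henselian valued fields, $L/K_H$ is Galois with group $H$. The intermediate field $K_H$ is henselian, because a finite algebraic extension of a henselian field is henselian. By the definition of a pure subgroup, $(L/K_H,v)$ is pure, and $\theta$ is a pure generator of it. So Proposition \ref{LemmaDutta} applies with $K$ replaced by $K_H$ and ${\rm Gal}(L/K)$ replaced by $H$.

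\textbf{Application.} For every $\sigma\in H$ and $b\in L^\times$, the proposition gives
\[
v\left(\frac{\sigma b}{b}-1\right)\in v(\sigma(\theta)-\theta)-\mathbf D .
\]

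\textbf{Interpretation.} Read the inequality $\gamma\geq S$ for a final segment $S$ as membership $\gamma\in S$. This is the convention under which $\gamma\leq\lambda\Llr\gamma^+\leq\lambda^+$ compares elements with final segments. Since $\mathbf D$ is an initial segment, $v(\sigma\theta-\theta)-\mathbf D$ is a final segment, and the displayed membership is exactly the claimed inequality.

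\textbf{Degenerate cases.} If $\sigma=1$, then $\sigma b/b-1=0$ has value $\infty$, and $v(\sigma\theta-\theta)=\infty$. Both sides are infinite, so the inequality holds trivially under the usual convention, and no separate argument is needed.

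\textbf{Main obstacle.} The only points needing care are that $K_H$ is henselian and that $\mathbf D$ is taken over $K_H$ rather than over $K$. Both are already built into the statement, so I expect no real difficulty.
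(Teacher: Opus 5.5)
Your proposal is correct and takes the same approach as the paper, which simply says the corollary follows immediately from Proposition \ref{LemmaDutta} applied to the pure Galois extension $L/K_H$. Your extra checks — that $K_H$ is henselian, that $\geq$ against a final segment means membership, and the trivial case $\sigma=1$ — make explicit what the paper leaves implicit.
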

\begin{proof}
This follows immediately from Proposition \ref{LemmaDutta}.
\end{proof}

For the remainder of this section, unless otherwise specified, we work in
the following situation:
\begin{displaymath}
(*)\qquad
\left\{\begin{array}{l}
\mathcal G=H_1\cdots H_r\mbox{ for pure subgroups }H_i,\ 1\leq i\leq r;\\[6pt]
\mbox{for each }i\mbox{ fix a pure generator }\alpha_i\mbox{ of }
(L/K_{H_i},v);\\[6pt]
\mbox{write }\gamma_{i1},\ldots,\gamma_{is_i}\mbox{ for the distinct values in}\\
\qquad\{v(\sigma\alpha_i-\alpha_i)\mid
\sigma\in H_i\setminus\{1\}\};\\[6pt]
\mathbf D_i=D_1(\alpha_i,K_{H_i}).
\end{array}\right.
\end{displaymath}

\begin{Prop}\label{prop:coordinateideals}
Suppose that we are in situation $(*)$. For each $i$ and $j$ there exists
a subgroup $H\leq H_i$ such that
\[
I_H=I_{\gamma_{ij}-\mathbf D_i}.
\]
\end{Prop}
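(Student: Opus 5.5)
The plan is to apply Theorem \ref{pureextens} to the pure extension $(L/K_{H_i},v)$, whose Galois group is $H_i$, using the fixed pure generator $\alpha_i$. That theorem describes $I_H$ for every nontrivial subgroup $H\leq H_i$ as $I_{\beta_H-\mathbf D_i}$, where $\beta_H=\min_{\sigma\in H\setminus\{1\}}v(\sigma\alpha_i-\alpha_i)$.

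One point needs checking first. The ideal $I_H$ in the statement is computed in $L/K$, while Theorem \ref{pureextens} computes it in $L/K_{H_i}$. However, the definition of $I_\sigma$ only involves $\sigma$ and elements $b\in L^\times$, and $H\leq H_i={\rm Gal}(L/K_{H_i})$. So $I_H$ is the same ideal of $\VR_L$ whether we regard $H$ as a subgroup of $\mathcal G$ or of $H_i$. Also, $K_{H_i}$ is henselian as a finite extension of the henselian field $K$, so the hypotheses of Theorem \ref{pureextens} hold.

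It therefore suffices to find, for the given $j$, a subgroup $H\leq H_i$ with $\beta_H=\gamma_{ij}$. I will use the subgroup
\[
H:=\{\sigma\in H_i\mid v(\sigma\alpha_i-\alpha_i)\geq\gamma_{ij}\}\cup\{1\}.
\]
By construction, every nontrivial element of $H$ has $v(\sigma\alpha_i-\alpha_i)\geq\gamma_{ij}$. Some element attains $\gamma_{ij}$ exactly, because $\gamma_{ij}$ is by definition one of these values. Hence $\beta_H=\gamma_{ij}$.

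The main obstacle is showing that this $H$ is a subgroup. For $\sigma,\tau\in H$ I write
\[
\sigma\tau\alpha_i-\alpha_i=\sigma(\tau\alpha_i-\alpha_i)+(\sigma\alpha_i-\alpha_i).
\]
Since $v$ is the unique extension of $v|_K$ to $L$ (henselian), it is Galois invariant, so $v(\sigma(\tau\alpha_i-\alpha_i))=v(\tau\alpha_i-\alpha_i)$. The ultrametric inequality then gives $v(\sigma\tau\alpha_i-\alpha_i)\geq\gamma_{ij}$. Closure under inverses follows similarly from $\sigma^{-1}\alpha_i-\alpha_i=-\sigma^{-1}(\sigma\alpha_i-\alpha_i)$.

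With $H$ a subgroup and $\beta_H=\gamma_{ij}$, Theorem \ref{pureextens} gives $I_H=I_{\gamma_{ij}-\mathbf D_i}$, which proves the proposition.
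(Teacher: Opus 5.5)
Your proof is correct and is essentially the paper's argument. The paper takes the cyclic subgroup $H=\langle\sigma\rangle$ generated by an element with $v(\sigma\alpha_i-\alpha_i)=\gamma_{ij}$, using the same Galois-invariance and ultrametric estimate (via a telescoping sum) to bound its powers below by $\gamma_{ij}$, and then checks both inclusions directly from Corollary \ref{Corimport} and the elements $\frac{\sigma(\alpha_i-a)}{\alpha_i-a}-1$ with $a\in K_{H_i}$; you instead take the larger subgroup $\{\sigma\in H_i\mid v(\sigma\alpha_i-\alpha_i)\geq\gamma_{ij}\}$ and cite Theorem \ref{pureextens} for $L/K_{H_i}$, which works equally well since both subgroups have minimum value exactly $\gamma_{ij}$, and your remarks that $I_H$ does not depend on the ambient group and that $K_{H_i}$ is henselian justify the reduction.
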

\begin{proof}
Fix $i,j$ and choose $\sigma\in H_i$ such that
$\gamma_{ij}=v(\sigma(\alpha_i)-\alpha_i)$. Set $H=\langle\sigma\rangle$.
If $\tau=\sigma^k\in H$, then
\[
\tau(\alpha_i)-\alpha_i
=\sum_{\ell=0}^{k-1}\sigma^\ell
\bigl(\sigma(\alpha_i)-\alpha_i\bigr),
\]
and therefore
\[
v(\tau(\alpha_i)-\alpha_i)\geq\gamma_{ij}.
\]
Corollary \ref{Corimport} now gives
\[
v\left(\frac{\tau b}{b}-1\right)\geq
\gamma_{ij}-\mathbf D_i
\qquad(\tau\in H,\ b\in L^\times),
\]
so $I_H\subseteq I_{\gamma_{ij}-\mathbf D_i}$. On the other hand, for
$a\in K_{H_i}$,
\[
v\left(\frac{\sigma(\alpha_i-a)}{\alpha_i-a}-1\right)
=\gamma_{ij}-v(\alpha_i-a).
\]
Letting $v(\alpha_i-a)$ run through $D_1(\alpha_i,K_{H_i})$ yields the
reverse inclusion and hence
$I_H=I_{\gamma_{ij}-\mathbf D_i}$.
\end{proof}

\begin{Def}
Suppose that we are in situation $(*)$ and take a subgroup $H$ of
$\mathcal G$. We say that $\sigma_i\in H_i$ \textbf{appears in $H$} if
there exist $\sigma_j\in H_j$, $j\neq i$, such that
$\sigma_1\cdots\sigma_r\in H$.
\end{Def}

\begin{Teo}\label{mainresultsofar}
Suppose that we are in situation $(*)$. Take a subgroup $H$ of $\mathcal G$
and set
\begin{equation}\label{eqnminH}
\gamma_H:=\min\{v(\sigma_i(\alpha_i)-\alpha_i)-\mathbf D_i\mid
\sigma_i\in H_i\setminus\{1\}\mbox{ appears in }H\}.
\end{equation}
Then
\[
I_H\subseteq I_{\gamma_H}.
\]
Moreover, for every $i$ for which a nontrivial element of $H_i$ appears in
$H$, let $\gamma_i$ be the smallest value
$v(\sigma_i(\alpha_i)-\alpha_i)$ among such elements. If there exists $i$
such that
\begin{equation}\label{casesmallerstrict}
\gamma_i-\mathbf D_i<\gamma_j-\mathbf D_j
\end{equation}
for every $j\neq i$ for which $\gamma_j$ is defined, then
\[
I_H=I_{\gamma_H}.
\]
\end{Teo}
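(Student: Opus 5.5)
The plan is to read $\gamma_H$ as a final segment of $vL$, namely the union (the minimum in the order on final segments) of the finitely many final segments $S_{\sigma_i}:=v(\sigma_i(\alpha_i)-\alpha_i)-\mathbf D_i$. Then $I_{\gamma_H}$ is the corresponding ideal $I_S$. Both inclusions will come from a telescoping factorization of $\tau b/b$ for $\tau\in H$, together with Corollary \ref{Corimport} applied in each factor $H_i$.

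\emph{Upper bound.} Take $\tau\in H\setminus\{1\}$ and write $\tau=\sigma_1\cdots\sigma_r$ with $\sigma_i\in H_i$, so each nontrivial $\sigma_i$ appears in $H$. For $b\in L^\times$ put $b_r=b$ and $b_{i}=\sigma_{i+1}\cdots\sigma_r b$, and set
\[
x_i:=\frac{\sigma_i(b_i)}{b_i}-1 .
\]
Then
\[
\frac{\tau b}{b}=\prod_{i=1}^r(1+x_i),
\qquad\text{hence}\qquad
\frac{\tau b}{b}-1=\sum_{\emptyset\neq T\subseteq\{1,\ldots,r\}}\ \prod_{i\in T}x_i .
\]
If $\sigma_i=1$ then $x_i=0$. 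Otherwise Corollary \ref{Corimport}, applied to $\sigma_i\in H_i$ and the element $b_i$, gives $vx_i\in S_{\sigma_i}\subseteq\gamma_H$. Since $1+x_i$ is a unit ($v$ is Galois invariant), each $x_i$ lies in $\VR_L$. Therefore every product $\prod_{i\in T}x_i$ lies in the ideal generated by any one of its factors, hence in $I_{\gamma_H}$. Summing gives $\tau b/b-1\in I_{\gamma_H}$, so $I_H\subseteq I_{\gamma_H}$.

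\emph{Equality under \eqref{casesmallerstrict}.} Let $i$ be the strictly minimal index, and choose $\sigma_i\in H_i\setminus\{1\}$ appearing in $H$ with $v(\sigma_i\alpha_i-\alpha_i)=\gamma_i$. Choose $\tau=\sigma_1\cdots\sigma_r\in H$ realizing this appearance. In this case $\gamma_H=\gamma_i-\mathbf D_i=:S_i$, since every other candidate segment for index $i$ is contained in $S_i$ and every segment for $j\neq i$ is contained in $S_j:=\gamma_j-\mathbf D_j\subsetneq S_i$. Put $\rho=\sigma_{i+1}\cdots\sigma_r$ and take
\[
b=\rho^{-1}(\alpha_i-a),\qquad a\in K_{H_i},
\]
so that $b_i=\alpha_i-a$. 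Then
\[
v(x_i)=\gamma_i-v(\alpha_i-a),
\]
as in the proof of Proposition \ref{prop:coordinateideals}. As $a$ varies, these values form a coinitial subset of $S_i$: in the defect case by the cofinality in Proposition \ref{prop:DuttaNovacoskiDefect}, and in the defectless case via the maximum of $\mathbf D_i$. For $j\neq i$ we still have $vx_j\in S_j$ by Corollary \ref{Corimport}.

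Because there are finitely many $j$ and each $S_j\subsetneq S_i$, every element of $S_i$ lying below the smallest of the initial parts $S_i\setminus S_j$ is smaller than all elements of every $S_j$. By coinitiality we can choose $a$ so that $vx_i$ is such an element, and so that it is arbitrarily far down in $S_i$. Then in the expansion of $\tau b/b-1$ the term $x_i$ has strictly smaller value than every other term. Indeed, any product containing some $x_j$ with $j\neq i$ has value at least $vx_j>vx_i$, using $vx_k\geq0$ for the remaining factors. Hence $v(\tau b/b-1)=vx_i$. These values are coinitial in $S_i=\gamma_H$, which gives $I_{\gamma_H}\subseteq I_H$.

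The main obstacle is this last step. The point is to make $x_i$ the \emph{exact} factor $\sigma_i(\alpha_i-a)/(\alpha_i-a)-1$ despite possible noncommutativity, which is handled by the twist $b=\rho^{-1}(\alpha_i-a)$. One must also check carefully that strict inequality of final segments yields values of $S_i$ lying below all $S_j$ simultaneously, including the defect case where no minimum exists.
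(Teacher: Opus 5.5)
Your proof is correct and is essentially the paper's argument. Your expansion of $\tau b/b=\prod_i(1+x_i)$ over subsets is a regrouping of the additive identity \eqref{veryimport}; your twist $b=\rho^{-1}(\alpha_i-a)$ is the paper's choice $\overline b=\overline\sigma_i b$, with the elements $\alpha_i-a$ taking the place of the generators in $\mathcal A_i\setminus J$, and the defect/defectless split for coinitiality is unnecessary because $v(\alpha_i-a)$ already runs through all of $\mathbf D_i$ as $a$ ranges over $K_{H_i}$.
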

\begin{proof}
Take $\sigma\in H$ and choose a decomposition
$\sigma=\sigma_1\cdots\sigma_r$, with $\sigma_i\in H_i$. For
$1\leq i\leq r-1$ set
$\overline\sigma_i=\sigma_{i+1}\cdots\sigma_r$ and
$\overline b_i=\overline\sigma_i b$. Then
\begin{equation}\label{veryimport}
\frac{\sigma b}{b}-1=
\frac{\overline\sigma_1b}{b}
\left(\frac{\sigma_1\overline b_1}{\overline b_1}-1\right)
+\cdots+
\left(\frac{\sigma_rb}{b}-1\right).
\end{equation}
Since $(K,v)$ is henselian, $v(\tau b)=vb$ for every
$\tau\in\mathcal G$. By Corollary \ref{Corimport} and the definition of
$\gamma_H$, every summand in \eqref{veryimport} has value at least
$\gamma_H$. Hence
\[
v\left(\frac{\sigma b}{b}-1\right)\geq\gamma_H,
\]
which gives $I_H\subseteq I_{\gamma_H}$.

Assume now that \eqref{casesmallerstrict} holds. Choose
$\sigma=\sigma_1\cdots\sigma_r\in H$ with the $i$-th component realizing
$\gamma_i$, and put $S_i=\gamma_i-\mathbf D_i$. For every $j\neq i$ for
which $\gamma_j$ is defined, put $S_j=\gamma_j-\mathbf D_j$. By
Proposition \ref{prop:coordinateideals},
\[
I_{\langle\sigma_i\rangle}=I_{S_i}.
\]
Since $S_i<S_j$, we have $I_{S_j}\subsetneq I_{S_i}$. Let $J$ be the
union of the finitely many ideals $I_{S_j}$ with $j\neq i$ for which
$\gamma_j$ is defined; if there is no such $j$, put $J=(0)$. Since ideals in
a valuation ring are linearly ordered, $J$ is an ideal and
$J\subsetneq I_{S_i}$.

Let
\[
\mathcal A_i=
\left\{\frac{\sigma_i c}{c}-1\;\middle|\;c\in L^\times\right\}.
\]
By Proposition \ref{prop:coordinateideals}, $\mathcal A_i$ generates
$I_{S_i}$. Since $J\subsetneq I_{S_i}$, some $g_0\in\mathcal A_i$ lies
outside $J$. Moreover, the elements of $\mathcal A_i\setminus J$ already
generate $I_{S_i}$: indeed, if $g\in\mathcal A_i\cap J$, then
$v(g_0)<v(g)$ (because $vJ$ is a final segment and $g_0\notin J$), hence
$g\in(g_0)$.

Now take any
\[
g=\frac{\sigma_i\overline b}{\overline b}-1
\in\mathcal A_i\setminus J
\]
and choose $b$ so that $\overline b=\overline\sigma_i b$. Every other
nonzero summand in \eqref{veryimport} has value in the corresponding final
segment $S_j$, whereas $vg\notin S_j$ for every such $j$. Hence those
summands have strictly larger value than $g$, no cancellation is possible,
and
\[
v\left(\frac{\sigma b}{b}-1\right)=vg.
\]
Thus every element of $\mathcal A_i\setminus J$ generates an ideal contained
in $I_H$. Since these elements generate $I_{S_i}=I_{\gamma_H}$, we obtain
$I_{\gamma_H}\subseteq I_H$. Together with the first inclusion this proves
$I_H=I_{\gamma_H}$.
\end{proof}

The strict inequality in Theorem \ref{mainresultsofar} is essential. The
following natural conjecture, which appeared in an earlier version of this
manuscript, is false.
\begin{Con}\label{conjecturenaosei}
Suppose that we are in situation $(*)$. Then for every nontrivial subgroup
$H$ of $\mathcal G$ there exist $i,j$ such that
\[
I_H=I_{\gamma_{ij}-\mathbf D_i}.
\]
\end{Con}

\subsection{A defectless counterexample to Conjecture \ref{conjecturenaosei}}
\begin{Exa}\label{exa:counterexample}
Let $k$ be an algebraically closed field of characteristic $p>0$ and let
$K=k((t))$. Choose integers
\[
0<m<n,\qquad p\nmid m n,
\]
and let
\[
x^p-x=t^{-m},\qquad y^p-y=t^{-n},\qquad L=K(x,y).
\]
Then $L/K$ is a defectless Galois extension with
$\mathcal G\simeq C_p\times C_p$. Nevertheless, there is a decomposition
of $\mathcal G$ as a direct product of two pure subgroups for which
Conjecture \ref{conjecturenaosei} fails.
\end{Exa}
\begin{proof}
The classes of $t^{-m}$ and $t^{-n}$ are linearly independent in
$K/\wp(K)$ over $\mathbb F_p$, where $\wp(z)=z^p-z$. Indeed, a nonzero
linear combination has a pole of order $m$ or $n$, neither divisible by
$p$, whereas the pole order of $z^p-z$ is divisible by $p$ whenever
$v(z)<0$. Hence $[L:K]=p^2$.

Every degree-$p$ subextension of $L/K$ is generated by an element
$ax+by$, with $(a,b)\in\mathbb F_p^2\setminus\{(0,0)\}$, and its
Artin--Schreier equation has a negative pole order $m$ or $n$ not divisible
by $p$.  Hence every such subextension is totally ramified.  If $L/K$ had a
nontrivial unramified quotient, its fixed field would give an unramified
subextension of degree $p$, a contradiction.  Therefore $L/K$ is totally
ramified.  In particular it is defectless, since $K=k((t))$ is complete
discretely valued with perfect residue field.

Let $\sigma,\tau\in\mathcal G$ be defined by
\[
\sigma(x)=x+1,\quad \sigma(y)=y,
\qquad
\tau(x)=x,\quad \tau(y)=y+1.
\]
Thus $\mathcal G=\langle\sigma\rangle\times\langle\tau\rangle$.
Put $N=\langle\tau\rangle$.

We determine the classical lower ramification filtration directly, using
Lemma \ref{lem:cp2-discrete}.  For every subgroup $H\leq\mathcal G$ of
order $p$, let $t_H$ be the unique lower ramification break of the quotient
extension $L^H/K$.  If $H=N$, then
\[
L^N=K(x),
\]
and the Artin--Schreier equation $x^p-x=t^{-m}$ has unique lower break $m$.
Thus $t_N=m$.

Now let $H\neq N$ have order $p$.  The fixed field $L^H$ is generated by an
element $ax+by$, with $a,b\in\mathbb F_p$ and $b\neq0$.  Its Artin--Schreier
equation is
\[
(ax+by)^p-(ax+by)=a t^{-m}+b t^{-n}.
\]
Since $n>m$ and $p\nmid n$, this degree-$p$ extension has unique lower break
$n$.  Hence
\[
t_H=n\qquad(H\neq N).
\]
There are $p$ order-$p$ subgroups different from $N$, and therefore
\[
\sum_{\substack{J\leq\mathcal G\\|J|=p}}(t_J+1)
 =(m+1)+p(n+1).
\]
Applying \eqref{eq:cp2-discrete-second} gives
\[
r_N+1=(m+1)+p(n+1)-p(m+1),
\]
so
\[
r_N=q:=m+p(n-m).
\]
For every $H\neq N$ of order $p$, the same formula gives
\[
r_H+1=(m+1)+p(n+1)-p(n+1)=m+1,
\]
hence $r_H=m$.  It follows immediately that the classical lower filtration is
\[
G_s=
\begin{cases}
\mathcal G,&0\leq s\leq m,\\
N,&m<s\leq q,\\
\{1\},&s>q.
\end{cases}
\]
Let $\pi$ be a uniformizer of $L$ and normalize $v_L(\pi)=1$. By Lemma
\ref{lem:classical-ideal-filtration},
\[
\mathcal G_{(\pi^s)}=G_s\qquad(s\geq1).
\]
Since every nonzero proper ideal of the discrete valuation ring $\VR_L$ is
of the form $(\pi^s)$, Proposition \ref{prop:kuhlmann-duality}(ii) now gives
\[
I_{\mathcal G}=(\pi^m),\qquad I_N=(\pi^q).
\]
Indeed, $(\pi^m)$ is the smallest ideal whose ideal-indexed ramification
group contains $\mathcal G$, while $(\pi^q)$ is the smallest one whose group
contains $N$. Since $q>m$, we obtain the strict inclusion
\begin{equation}\label{strictINIG}
I_N=(\pi^q)\subsetneq(\pi^m)=I_{\mathcal G}.
\end{equation}
This is the precise bridge between the classical ramification filtration and
the ideals used in the conjecture.

Now choose a different decomposition of $\mathcal G$:
\[
H_1=\langle\sigma\rangle,
\qquad
H_2=\langle\sigma\tau\rangle.
\]
Then $\mathcal G=H_1\times H_2$, but neither $H_1$ nor $H_2$ is contained
in $N$. Since the only nontrivial groups in the ramification filtration are
$N$ and $\mathcal G$, Proposition \ref{prop:kuhlmann-duality} yields
\[
I_{H_1}=I_{H_2}=I_{\mathcal G}.
\]
Because $L/K$ is totally ramified, each degree-$p$ extension
$L/K_{H_i}$ is totally ramified.  Lemma
\ref{lem:totally-ramified-prime-pure} therefore shows that both $H_1$ and
$H_2$ are pure, so this decomposition lies in the situation of Conjecture
\ref{conjecturenaosei}.  Since $H_i$ has order $p$, its only nontrivial
subgroup is $H_i$ itself.  Proposition \ref{prop:coordinateideals} therefore
shows that every ideal $I_{\gamma_{ij}-\mathbf D_i}$ arising from the
$i$-th pure factor is equal to $I_{H_i}=I_{\mathcal G}$.  On the other hand,
\eqref{strictINIG} gives
\[
I_N\subsetneq I_{\mathcal G}.
\]
Thus $I_N$ is not among the ideals supplied by either pure factor, and the
conjecture is false.
\end{proof}

\begin{Obs}\label{rmk:cancellation}
The counterexample explains the obstruction in Theorem
\ref{mainresultsofar}. With the decomposition
$\mathcal G=H_1\times H_2$ above,
\[
\tau=\sigma^{-1}(\sigma\tau).
\]
The two components have the same first ramification level, but their product
lies in the deeper group $N$. Thus the terms of minimum value in
\eqref{veryimport} can cancel for structural ramification-theoretic reasons.
The strict-minimum hypothesis in Theorem \ref{mainresultsofar} is therefore
not merely an artifact of the proof.
\end{Obs}

\subsection{Ramification-adapted decompositions}
The counterexample suggests that the cyclic factors must be chosen
compatibly with the ramification filtration.

\begin{Def}\label{def:adapted}
Assume that $\mathcal G\simeq C_p^r$ and write
\[
\mathcal G=H_1\times\cdots\times H_r,
\qquad |H_i|=p.
\]
We call this decomposition \textbf{ramification-adapted} if every ideal-indexed higher
ramification group $U=\mathcal G_I$ is of the form
\[
U=\prod_{i\in A}H_i
\]
for some $A\subseteq\{1,\ldots,r\}$.
\end{Def}

\begin{Teo}\label{teo:adapted}
Suppose that $\mathcal G\simeq C_p^r$ and that
$\mathcal G=H_1\times\cdots\times H_r$ is ramification-adapted. Then, for
every nontrivial subgroup $H\leq\mathcal G$, there exists $i$ such that
\[
I_H=I_{H_i}.
\]
If, in addition, every adapted factor $H_i$ is pure and pure generators
$\alpha_i$ with the associated data $\gamma_{ij},\mathbf D_i$ are fixed as
in $(*)$, then there is also an index $j$ such that
\[
I_H=I_{H_i}=I_{\gamma_{ij}-\mathbf D_i}.
\]
Thus the conclusion of Conjecture \ref{conjecturenaosei} holds for every
ramification-adapted decomposition whose cyclic factors are pure.
\end{Teo}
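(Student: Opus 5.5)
Set $U:=\mathcal G_{I_H}$ and work with it throughout.

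By Proposition \ref{prop:kuhlmann-duality}(ii), $I_H$ is the smallest ideal $I'$ with $H\subseteq\mathcal G_{I'}$, so $H\subseteq U$. By (iii), $I_U=I_H$. Since the decomposition is ramification-adapted, $U=\prod_{i\in A}H_i$ for some $A\subseteq\{1,\ldots,r\}$. The set $A$ is nonempty because $H\neq\{1\}$.

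Next we compute $I_U$ from the factors. The definition gives $I_U=\bigcup_{\sigma\in U}I_\sigma$. For a product $\sigma=\prod_{i\in A}\sigma_i$ we use the telescoping identity \eqref{veryimport}. Because $v(\tau b)=vb$, it shows that $v(\sigma b/b-1)$ is at least the minimum over $i\in A$ of values lying in the sets defining $I_{H_i}$. Hence
\[
I_U\subseteq\bigcup_{i\in A}I_{H_i}.
\]
The reverse inclusion is clear since each $H_i\leq U$. Ideals of $\VR_L$ are linearly ordered, so this finite union equals $I_{H_{i_0}}$ for some $i_0\in A$. Therefore
\[
I_H=I_U=I_{H_{i_0}}.
\]
This gives the first claim.

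There is an alternative that avoids the telescoping argument. For each $i\in A$ we have $H_i\leq U$, so $I_{H_i}\subseteq I_U$. Pick $i_0\in A$ with $I_{H_{i_0}}$ maximal among these and put $I'=I_{H_{i_0}}$. By (ii), each $H_i\subseteq\mathcal G_{I_{H_i}}\subseteq\mathcal G_{I'}$, and since $\mathcal G_{I'}$ is a group, $U\subseteq\mathcal G_{I'}$. Then (ii) again gives $I_U\subseteq I'$, so $I_U=I'$. I would present this version because it is cleaner.

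For the second claim, assume $H_{i_0}$ is pure with data as in $(*)$. Since $|H_{i_0}|=p$, its only nontrivial subgroup is itself, and every nontrivial $\sigma\in H_{i_0}$ generates it. Theorem \ref{pureextens}, applied to $L/K_{H_{i_0}}$ with $\theta=\alpha_{i_0}$, gives
\[
I_{H_{i_0}}=I_{\beta-\mathbf D_{i_0}},\qquad \beta=\min_j\gamma_{i_0j}.
\]
The minimum $\beta$ is one of the $\gamma_{i_0j}$. Alternatively, Proposition \ref{prop:coordinateideals} supplies a subgroup of $H_{i_0}$, necessarily $H_{i_0}$ itself, realizing any $I_{\gamma_{i_0j}-\mathbf D_{i_0}}$. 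Either way we get $I_H=I_{H_{i_0}}=I_{\gamma_{i_0j}-\mathbf D_{i_0}}$ for a suitable $j$.

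The main point of care is the first step. It requires that $\mathcal G_{I_H}$ is genuinely one of the ideal-indexed groups to which the adaptedness hypothesis applies, and that $H\subseteq\mathcal G_{I_H}$. Both follow from Proposition \ref{prop:kuhlmann-duality}, so no real obstacle is expected.
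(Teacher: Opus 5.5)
Your argument is correct. Its opening reduction ($U=\mathcal G_{I_H}$, $H\subseteq U$, $I_U=I_H$, $U=\prod_{i\in A}H_i$) matches the paper's, and so does the purity step (Theorem \ref{pureextens} applied to $L/K_{H_{i_0}}$). The core step, however, is different.

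The paper uses finiteness of the chain to take the largest ideal-indexed group $V$ properly contained in $U$. It writes $V=\prod_{i\in B}H_i$ with $B\subsetneq A$. For $i\in A\setminus B$ it argues that $U$ is the smallest ideal-indexed group containing $H_i$, so $\mathcal G_{I_{H_i}}=U$ and $I_{H_i}=I_U$.

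You instead prove $I_U=\max_{i\in A}I_{H_i}$, in one of two ways. The first is the valuation estimate from \eqref{veryimport}, as in the first half of Theorem \ref{mainresultsofar}. The second notes that $\mathcal G_{I'}$, being a subgroup, contains $U$ as soon as it contains every $H_i$.

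Your route needs no predecessor $V$ and no finiteness of the chain. It also gives $I_{\prod_{i\in A}H_i}=\max_{i\in A}I_{H_i}$ for every subfamily of factors, with adaptedness used only to know that $U$ is such a product. The paper's route instead names the representing factors explicitly: they are those $H_i\subseteq U$ with $H_i\not\subseteq V$. The two selections agree, since $i\in A$ maximizes $I_{H_i}$ if and only if $\mathcal G_{I_{H_i}}=U$, i.e. if and only if $i\in A\setminus B$.
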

\begin{proof}
Set
\[
U:=\mathcal G_{I_H}.
\]
By Proposition \ref{prop:kuhlmann-duality}(i)--(iii), $H\subseteq U$,
$I_U=I_H$, and $U$ is the smallest ideal-indexed higher ramification group
containing $H$: indeed, if $H\subseteq\mathcal G_J$, then
$I_H\subseteq J$, hence
$U=\mathcal G_{I_H}\subseteq\mathcal G_J$.

Because the chain of ideal-indexed higher ramification groups is finite, there is a largest
one $V$ properly contained in $U$; if $U$ is the smallest nontrivial member
of the chain, take $V=\{1\}$. Since the decomposition is adapted, there are
subsets $B\subsetneq A$ such that
\[
U=\prod_{i\in A}H_i,
\qquad
V=\prod_{i\in B}H_i.
\]
Choose $i\in A\setminus B$. Then $H_i\subseteq U$ and
$H_i\nsubseteq V$. If $W$ were an ideal-indexed higher ramification group with
$H_i\subseteq W\subsetneq U$, then $W\subseteq V$, because $V$ is the
largest proper member below $U$, contradicting $H_i\nsubseteq V$.
Consequently $U$ is the smallest ideal-indexed higher ramification group containing $H_i$.
By Proposition \ref{prop:kuhlmann-duality} again,
\[
\mathcal G_{I_{H_i}}=U
\qquad\text{and}\qquad
I_{H_i}=I_U=I_H.
\]
For the second assertion, assume in addition that the factors are pure and
that the data in $(*)$ are fixed.  Theorem \ref{pureextens}, applied to the
pure extension $L/K_{H_i}$, gives
\[
I_{H_i}=
I_{\min_{\rho\in H_i\setminus\{1\}}
\{v(\rho\alpha_i-\alpha_i)-\mathbf D_i\}}
=I_{\gamma_{ij}-\mathbf D_i}
\]
for an index $j$ at which the finite minimum is attained.
\end{proof}

\begin{Cor}\label{cor:adaptedprincipality}
Let $(L/K,v)$ be a finite Galois extension of henselian valued fields with
\[
\mathcal G\simeq C_p^r,
\]
and let
\[
\mathcal G=H_1\times\cdots\times H_r,
\qquad |H_i|=p,
\]
be a ramification-adapted decomposition. Then the following conditions are
equivalent:
\begin{enumerate}
\item every ramification ideal of $\mathcal E=(L/K,v)$ is principal;
\item $I_{H_i}$ is principal for every $i$;
\item the degree-$p$ extension $(L/K_{H_i},v)$ is defectless for every $i$.
\end{enumerate}
These equivalent conditions do not, in general, imply that $(L/K,v)$ is
defectless.
\end{Cor}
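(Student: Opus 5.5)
The plan is to prove (1)$\Rightarrow$(2)$\Rightarrow$(3)$\Rightarrow$(1), and then to derive the final sentence from Kuhlmann's degree-$p^2$ example.

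For (1)$\Rightarrow$(2), fix $i$. First I will check that $I_{H_i}$ is a subgroup ideal of $L/K$ attached to the nontrivial subgroup $H_i$. I also need to identify $I_{H_i}$ computed in $L/K$ with the ideal of ${\rm Gal}(L/K_{H_i})$ computed in $L/K_{H_i}$. This holds because the defining set $\{\sigma b/b-1\mid \sigma\in H_i,\ b\in L^\times\}$ only involves $L$ and $H_i$, and $(K_{H_i},v)$ is henselian as a finite extension of a henselian field. There are then two cases. If $I_{H_i}\subseteq\mathcal M_L$ and $I_{H_i}\ne0$, it is a ramification ideal and so is principal by (1). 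Otherwise $I_{H_i}$ is $(0)$, $\VR_L$, or a fractional ideal larger than $\VR_L$. I need to handle the last possibility. The values $v(\sigma b/b-1)$ with $\sigma\ne1$ are at least $\min(0, \cdot)$, and in fact $v(\sigma b/b)=0$, so $v(\sigma b/b-1)\ge0$. Hence $I_{H_i}\subseteq\VR_L$, and it is either $\VR_L$ or inside $\mathcal M_L$. In every case it is principal.

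For (2)$\Leftrightarrow$(3), apply Proposition~\ref{prop:prime-degree-principality} to the degree-$p$ Galois extension $(L/K_{H_i},v)$, using the identification of $I_{H_i}$ above. This step is immediate.

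For (2)$\Rightarrow$(1), let $I$ be a ramification ideal, so $I=I_H$ for some nontrivial $H$. Theorem~\ref{teo:adapted} gives an index $i$ with $I_H=I_{H_i}$, and this ideal is principal by (2). This is where adaptedness is essential: Example~\ref{exa:counterexample} shows that without it, the ideals $I_H$ need not be among the $I_{H_i}$.

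For the final sentence I will use the degree-$p^2$ example of \cite[Section 3.5]{Topics}, as described in the introduction. There $L/K$ is a Galois defect extension with group $C_p\times C_p$, and all nontrivial subgroup ideals coincide with a single principal ideal. Consequently only one nontrivial higher ramification group occurs, namely $\mathcal G$ itself, by Proposition~\ref{prop:kuhlmann-duality}(iii). Every basis is therefore adapted, and (1) holds while $L/K$ has defect. The main obstacle is verifying the claims about that example: that all $I_H$ are equal and principal. I would try to get this from (3) directly. Choose a basis so that one factor has fixed field $L_0$ with $L/L_0$ defectless. I then need the other factor's top slice $L/K_{H_2}$ to also be defectless. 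For this I would rely on Kuhlmann's explicit computation in \cite{Topics} rather than rederive it.
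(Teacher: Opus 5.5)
Your proposal is correct and follows the paper's proof essentially step for step: (2)$\Rightarrow$(1) from Theorem~\ref{teo:adapted}; (1)$\Rightarrow$(2) by noting $I_{H_i}\subseteq\VR_L$, so that $I_{H_i}$ is either a ramification ideal or $\VR_L$; (2)$\Leftrightarrow$(3) from Proposition~\ref{prop:prime-degree-principality}; and the last sentence from Kuhlmann's example, in which all nontrivial $I_H$ coincide and hence every basis is adapted. You should drop your side idea of verifying the example ``from (3)'': the step (3)$\Rightarrow$(1) presupposes that the chosen basis is adapted, which in this example is itself deduced from the equality of all $I_H$, and (3) would only give principality, not equality; instead, simply cite Kuhlmann's computation $I_{\langle\tau\rangle}=I_{\langle\sigma\tau^i\rangle}=(1/\vartheta)$, as the paper does.
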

\begin{proof}
By Theorem \ref{teo:adapted}, for every nontrivial subgroup
$H\leq\mathcal G$ there exists $i$ such that
\[
I_H=I_{H_i}.
\]
Hence (2) implies (1). Conversely, assume (1). For a fixed $i$, if
$I_{H_i}$ is a ramification ideal, then it is principal by assumption. If
$I_{H_i}$ is not contained in the maximal ideal of $\VR_L$, then
$I_{H_i}=\VR_L$: indeed, $I_{H_i}\subseteq\VR_L$ and an ideal of a
valuation ring which contains a unit is the whole valuation ring. Thus
$I_{H_i}$ is principal in either case, and (1) implies (2).

Since $|H_i|=p$, Proposition \ref{prop:prime-degree-principality}, applied
to the degree-$p$ Galois extension $L/K_{H_i}$, shows that $I_{H_i}$ is
principal if and only if $L/K_{H_i}$ is defectless.  This proves the
equivalence of (2) and (3), without any purity assumption on the adapted
factors.

The last assertion follows from Kuhlmann's example discussed in
Subsection \ref{subsec:kuhlmann}: there $L/K$ has nontrivial defect, while
every nontrivial subgroup ideal is the same principal ideal. Since every
basis is ramification-adapted in that example, conditions (1)--(3) hold.
\end{proof}

\begin{Cor}\label{cor:adaptedexists}
Let $(L/K,v)$ be a finite Galois extension of henselian valued fields with
\[
\mathcal G\simeq C_p^r.
\]
Then there exists a ramification-adapted decomposition
\[
\mathcal G=H_1\times\cdots\times H_r,
\qquad |H_i|=p.
\]
Consequently, after a suitable choice of cyclic factors, every subgroup ideal
$I_H$ is equal to $I_{H_i}$ for some $i$.
\end{Cor}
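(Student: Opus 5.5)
The argument is linear algebra over $\mathbb F_p$, followed by an application of Theorem \ref{teo:adapted}.

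First, view $\mathcal G\simeq C_p^r$ as an $r$-dimensional $\mathbb F_p$-vector space. Every subgroup is then a subspace, and the nontrivial ideal-indexed higher ramification groups $\mathcal G_I$ are subspaces. By the remark in Subsection \ref{subsec:idealramgroups}, these groups are linearly ordered by inclusion as $I$ varies. Since $\mathcal G$ is finite, only finitely many distinct groups occur, so we obtain a flag
\[
\{1\}=U_0\subsetneq U_1\subsetneq\cdots\subsetneq U_k\subseteq\mathcal G
\]
containing every $\mathcal G_I$. We add $\{1\}$ and $\mathcal G$ to the chain if they do not already occur; enlarging the flag does no harm.

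Next, we build an adapted basis by induction along the flag. Take a basis of $U_1$. Given a basis of $U_{\ell}$, extend it to a basis of $U_{\ell+1}$, and finally extend the basis of $U_k$ to a basis of $\mathcal G$. Call the resulting basis $e_1,\ldots,e_r$ and set $H_i=\langle e_i\rangle$, so that $|H_i|=p$ and $\mathcal G=H_1\times\cdots\times H_r$. By construction each $U_\ell$ is spanned by an initial segment of the basis, namely $U_\ell=\prod_{i\le \dim U_\ell}H_i$. Since every $\mathcal G_I$ equals some $U_\ell$, every $\mathcal G_I$ is a product of a subfamily of the $H_i$. This is exactly Definition \ref{def:adapted}.

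Finally, the consequence follows by applying Theorem \ref{teo:adapted} to this decomposition. For every nontrivial $H$ it gives an $i$ with $I_H=I_{H_i}$. For $H=\{1\}$ the ideal is $I_{\{1\}}=(0)$, which lies outside the family $\mathcal I(\mathcal E)$, so the statement is understood for nontrivial subgroups.

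The only point needing care is the claim that the family of all $\mathcal G_I$ is a finite chain of subspaces. Subgroups of an elementary abelian $p$-group are automatically $\mathbb F_p$-subspaces, because closure under addition gives closure under scalar multiplication by $\mathbb F_p$. Linear ordering of the $\mathcal G_I$ is immediate from the definition, since the ideals $I$ are linearly ordered. Beyond this observation there is no real obstacle.
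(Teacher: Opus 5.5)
Your proposal is correct and follows essentially the same route as the paper. Both regard $\mathcal G$ as an $\mathbb F_p$-vector space whose ideal-indexed higher ramification groups form a finite flag of subspaces, take a basis adapted to that flag (you spell out the successive basis extension the paper leaves implicit), and then apply the first part of Theorem \ref{teo:adapted} to nontrivial $H$.
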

\begin{proof}
Regard $\mathcal G$ as an $r$-dimensional vector space over $\mathbb F_p$.
The distinct ideal-indexed higher ramification groups form a finite flag of subspaces
\[
\mathcal G=U_0\supsetneq U_1\supsetneq\cdots\supsetneq U_s\supseteq\{0\}.
\]
Choose a basis of $\mathcal G$ adapted to this flag, i.e. a basis such that
each $U_j$ is spanned by a subfamily of the basis. If $H_i$ is the line
spanned by the $i$-th basis vector, the resulting direct product is
ramification-adapted. Each $H_i$ has order $p$.  The last assertion follows
from the first part of Theorem \ref{teo:adapted} (for nontrivial $H$).
\end{proof}

\begin{Cor}\label{Corinsidefactor}
Assume that we are in situation $(*)$ and that $H\subseteq H_i$ for some
$i$. Then
\[
I_H=I_{\min_{\sigma\in H\setminus\{1\}}
\{v(\sigma\alpha_i-\alpha_i)\}-\mathbf D_i}.
\]
\end{Cor}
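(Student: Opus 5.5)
The natural route is to reduce Corollary \ref{Corinsidefactor} to Theorem \ref{pureextens}, applied not to $L/K$ but to the pure extension $L/K_{H_i}$. By situation $(*)$, $\alpha_i$ is a pure generator of $(L/K_{H_i},v)$. This is a finite Galois extension of henselian valued fields, because $K_{H_i}$ is a finite extension of the henselian field $K$ and is therefore henselian. Its Galois group is $H_i$, and $H$ is a nontrivial subgroup of $H_i$; if $H$ is trivial there is nothing to prove. Theorem \ref{pureextens}, with base field $K_{H_i}$, $\theta=\alpha_i$, and subgroup $H\le{\rm Gal}(L/K_{H_i})=H_i$, then gives
\[
I^{(i)}_H=I_{\beta_H-D_1(\alpha_i,K_{H_i})},\qquad
\beta_H=\min_{\sigma\in H\setminus\{1\}}v(\sigma\alpha_i-\alpha_i),
\]
where $I^{(i)}_H$ denotes the subgroup ideal computed in the extension $L/K_{H_i}$. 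Since $D_1(\alpha_i,K_{H_i})=\mathbf D_i$ by definition, the right-hand side is exactly the ideal in the statement.

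The one point to check is that $I^{(i)}_H$ coincides with the ideal $I_H$ computed for $L/K$. This is immediate from the definition. The ideal $I_\sigma$ only involves $\sigma$, elements $b\in L^\times$, and the valuation $v$ on $L$; none of these depend on the base field. So $I_\sigma$ is the same whether $\sigma$ is regarded as an element of $H_i$ or of $\mathcal G$, and therefore so is $I_H=\bigcup_{\sigma\in H}I_\sigma$. The final segments on both sides are also taken in $vL$, so the notation $I_S$ means the same thing in both settings.

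There is no real obstacle. The only subtle point is this base-change compatibility of $I_H$, together with the observation that Theorem \ref{pureextens} holds for arbitrary nontrivial subgroups, not just the full Galois group. Alternatively, one could argue as in Proposition \ref{prop:coordinateideals}. The upper inclusion comes from Corollary \ref{Corimport}, using $v(\sigma\alpha_i-\alpha_i)\ge\beta_H$ for all $\sigma\in H\setminus\{1\}$. The reverse inclusion comes from the elements $\sigma_H(\alpha_i-a)/(\alpha_i-a)-1$ with $a\in K_{H_i}$, where $\sigma_H\in H$ attains $\beta_H$; their values are $\beta_H-v(\alpha_i-a)$, and these run over the final segment $\beta_H-\mathbf D_i$.
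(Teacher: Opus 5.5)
Your proposal is correct and is the paper's proof: the paper likewise applies Theorem~\ref{pureextens} to the pure extension $L/K_{H_i}$, with $\theta=\alpha_i$ and the subgroup $H\leq H_i$. Your added check that $I_H$ depends only on $L$, $v$ and the automorphisms in $H$, and not on the base field, is the point the paper leaves implicit; your alternative argument simply reproduces the proof of Theorem~\ref{pureextens} (equivalently, of Proposition~\ref{prop:coordinateideals}).
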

\begin{proof}
This is Theorem \ref{pureextens} applied to the pure extension
$L/K_{H_i}$ and the subgroup $H\leq H_i$.
\end{proof}

\begin{Cor}\label{Corkuhlmn}
Assume that we are in situation $(*)$. Let $H\leq\mathcal G$, and take an
index $i$ for which the minimum in \eqref{eqnminH} is attained. Suppose that
$(L/K_{H_i},v)$ is defectless and that the pure generator $\alpha_i$ can be
chosen such that
\[
\mathbf D_i=v\alpha_i
\quad\mbox{and}\quad
\sigma(\tau\alpha_i)=\tau\alpha_i
\]
for all $\tau\in H_i$ and all $\sigma\in H_j$, $j\neq i$. Then
$I_H=I_{\gamma_H}$.
\end{Cor}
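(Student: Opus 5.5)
The inclusion $I_H\subseteq I_{\gamma_H}$ is already the first part of Theorem \ref{mainresultsofar}. So the work is the reverse inclusion $I_{\gamma_H}\subseteq I_H$, and I will prove it by exhibiting generators of $I_{\gamma_H}$ that are of the form $\sigma b/b-1$ with $\sigma\in H$.

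\textbf{Choice of $\sigma$ and $b$.} Fix $i$ attaining the minimum in \eqref{eqnminH}. Choose $\sigma=\sigma_1\cdots\sigma_r\in H$ whose $i$-th component $\sigma_i\neq 1$ realizes that minimum, so that
\[
I_{\gamma_H}=I_{v(\sigma_i\alpha_i-\alpha_i)-\mathbf D_i}.
\]
Since $L/K_{H_i}$ is defectless and $\mathbf D_i=v\alpha_i$, the set $\mathbf D_i$ has largest element $v\alpha_i$. Thus $I_{\gamma_H}$ is the principal ideal generated by $\sigma_i\alpha_i/\alpha_i-1$. This suggests testing with $b=\alpha_i$.

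\textbf{The computation.} Write $\sigma=\sigma_i\rho$, where $\rho=\prod_{j\neq i}\sigma_j$; the group is abelian in the intended applications, and otherwise I would reorder using the factorization \eqref{veryimport} with the $i$-th factor placed first. By hypothesis every $\sigma_j\in H_j$ with $j\neq i$ fixes $\tau\alpha_i$ for all $\tau\in H_i$. In particular $\rho$ fixes $\alpha_i$, and $\rho$ also fixes $\sigma_i\alpha_i$. Hence $\sigma\alpha_i=\sigma_i\alpha_i$ in either order of composition, and
\[
\frac{\sigma\alpha_i}{\alpha_i}-1=\frac{\sigma_i\alpha_i}{\alpha_i}-1,
\]
whose value is exactly $v(\sigma_i\alpha_i-\alpha_i)-v\alpha_i=\min\gamma_H$. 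So this element lies in $I_H$ and generates $I_{\gamma_H}$, giving $I_{\gamma_H}\subseteq I_H$. Together with the first inclusion, $I_H=I_{\gamma_H}$.

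\textbf{Main obstacle.} The delicate step is justifying that the components $\sigma_j$, $j\neq i$, act trivially on $\sigma_i\alpha_i$ regardless of the order in the product decomposition. The hypothesis is stated exactly for $\tau\alpha_i$ with $\tau\in H_i$, which covers both orders: $\rho(\sigma_i\alpha_i)=\sigma_i\alpha_i$ and $\sigma_i(\rho\alpha_i)=\sigma_i\alpha_i$. The only thing to check is that each $\sigma_j$ is applied to an element of the form $\tau\alpha_i$ with $\tau\in H_i$. For a product with components interleaved in an arbitrary order, this holds inductively, since each $\sigma_j$ with $j\neq i$ leaves the set $\{\tau\alpha_i\mid\tau\in H_i\}$ pointwise fixed.
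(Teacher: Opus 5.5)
Your proof is correct and is essentially the paper's argument: you choose $\sigma=\sigma_1\cdots\sigma_r\in H$ whose $i$-th component attains the minimum, test with $b=\alpha_i$, and use the hypotheses to get $\sigma\alpha_i=\sigma_i\alpha_i$ and $\max\mathbf D_i=v\alpha_i$, so that $v(\sigma\alpha_i/\alpha_i-1)$ is the least element of $\gamma_H$ and the inclusion of Theorem \ref{mainresultsofar} becomes an equality. Your final paragraph, which tracks $\alpha_i$ through the factors in the given order, is the right justification, and it makes your earlier remark about reordering via \eqref{veryimport} unnecessary.
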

\begin{proof}
Choose $\sigma=\sigma_1\cdots\sigma_r\in H$ for which the minimum in
\eqref{eqnminH} is attained at the $i$-th component. The hypotheses imply
\[
v\left(\frac{\sigma\alpha_i}{\alpha_i}-1\right)=\gamma_H,
\]
so the inclusion in Theorem \ref{mainresultsofar} is an equality.
\end{proof}
\section{Applications to Artin--Schreier extensions}\label{sec:ASapplications}

We now apply the preceding results to elementary abelian extensions obtained
as composita of Artin--Schreier extensions.  In this setting the canonical
cyclic factors have degree $p$, and their ramification ideals admit a
particularly simple description even without a purity hypothesis.  We then
return to Kuhlmann's degree-$p^2$ example and explain its relation with the
ramification-adapted theory developed above.

\subsection{Linearly disjoint Artin--Schreier composita}

Let $\operatorname{char}(K)=p>0$.  Take Artin--Schreier elements
\[
\alpha_1,\ldots,\alpha_n\in\overline K
\]
such that the extensions $K(\alpha_1),\ldots,K(\alpha_n)$ are linearly
disjoint over $K$, and set
\[
L=K(\alpha_1,\ldots,\alpha_n).
\]
For each $i$ put
\[
K_i=K(\alpha_1,\ldots,\alpha_{i-1},\alpha_{i+1},\ldots,\alpha_n),
\qquad
H_i={\rm Gal}(L/K_i).
\]
Then $|H_i|=p$ and $L=K_i(\alpha_i)$.

\begin{Prop}\label{prop:AS-coordinate-ideal}
Assume that $(K,v)$ is henselian.  With the notation above, set
\[
\mathbf D_i=D_1(\alpha_i,K_i).
\]
Then, for every $i$,
\[
I_{H_i}=I_{-\mathbf D_i}.
\]
This formula does not require \textbf{(GE)}, defect, or purity of the
coordinate factor.
\end{Prop}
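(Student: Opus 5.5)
We plan to prove the two inclusions separately. Fix a generator $\sigma$ of $H_i$ with $\sigma\alpha_i=\alpha_i+1$. This is possible because $\alpha_i^p-\alpha_i\in K\subseteq K_i$, $[L:K_i]=p$, and the conjugates of $\alpha_i$ over $K_i$ are $\alpha_i+k$ with $k\in\mathbb F_p$. Every nontrivial element of $H_i$ is a power $\sigma^k$, and $\sigma^k\alpha_i-\alpha_i=k$ has value $0$. Thus the expected answer is the pure-extension formula of Theorem \ref{pureextens} with $\beta_{H_i}=0$. The point of the proposition is that the formula should survive without purity.

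For the inclusion $I_{-\mathbf D_i}\subseteq I_{H_i}$ we copy the second half of the proof of Proposition \ref{prop:coordinateideals}. For $a\in K_i$,
\[
\frac{\sigma(\alpha_i-a)}{\alpha_i-a}-1=\frac{1}{\alpha_i-a},
\]
and this element has value $-v(\alpha_i-a)$. Every element of the final segment $-\mathbf D_i$ is of this form $-v(\alpha_i-a)$ by the definition of $D_1(\alpha_i,K_i)$. Hence every $c\in L^\times$ with $vc\in-\mathbf D_i$ satisfies $vc\geq v(\sigma b/b-1)$ for $b=\alpha_i-a$, so $c\in I_\sigma\subseteq I_{H_i}$.

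For the reverse inclusion $I_{H_i}\subseteq I_{-\mathbf D_i}$ we must show the following for every $\tau\in H_i\setminus\{1\}$ and $b\in L^\times$: there is $a\in K_i$ with
\[
v\left(\frac{\tau b}{b}-1\right)\geq -v(\alpha_i-a).
\]
It suffices to treat $\tau=\sigma$. Indeed, $\sigma^k$ generates the same group, and $\sigma^k\alpha_i=\alpha_i+k$ with $k\in\mathbb F_p^\times$; replacing $\alpha_i$ by $k^{-1}\alpha_i$ changes neither $K_i$ nor the distance set $\mathbf D_i$, since $v(k)=0$.

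When $L/K_i$ is pure, this inclusion is exactly Proposition \ref{LemmaDutta}, since $v(\sigma\alpha_i-\alpha_i)=0$. This covers in particular every defect case, by Lemma \ref{lem:defect-prime-pure}. In general we would instead invoke Kuhlmann's prime-degree computation for Artin--Schreier extensions in \cite{Topics}. That computation identifies the ramification ideal of $K_i(\alpha_i)/K_i$ as the ideal generated by the elements $1/(\alpha_i-a)$ with $a\in K_i$, and its proof works with an arbitrary Artin--Schreier generator rather than a pure one. We would check that Kuhlmann's normalization of the ideal, namely the one generated by $(\sigma b-b)/b$, matches the definition of $I_\sigma$ used here; the remark in Subsection \ref{subsec:idealramgroups} guarantees this.

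We expect the main obstacle to be the nonpure defectless case, where no single translate $\alpha_i-a$ yields a valuation basis. Here we would write $b=f(\alpha_i)$ with $f\in K_i[X]$ and $\deg f<p$. Then $\sigma b-b=f(\alpha_i+1)-f(\alpha_i)=\sum_{k\ge1}\partial_kf(\alpha_i)$. The task is to bound this below by $vf(\alpha_i)-\max\mathbf D_i$, using that $\mathbf D_i$ has a maximum in the defectless Artin--Schreier case. If this direct estimate becomes delicate, we would fall back on citing Kuhlmann's explicit formula, which does not use purity.
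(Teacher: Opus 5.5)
Your argument is correct, and its skeleton matches the paper's. The paper proves the proposition entirely by quoting Kuhlmann. In the defect case, \cite[Theorem 3.10]{Topics} gives the value set $\{v((\sigma b-b)/b)\mid b\in L^\times\}=-\mathbf D_i$ outright. In the defectless case, \cite[Lemma 2.12]{Topics} supplies $c\in K_i$ with $v(\alpha_i-c)=\max\mathbf D_i$, and then \cite[Proposition 3.14, Theorem 3.15(1)]{Topics} give $I_{H_i}=(1/(\alpha_i-c))$.

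You differ from this in two places. First, you prove $I_{-\mathbf D_i}\subseteq I_{H_i}$ by hand via $\sigma(\alpha_i-a)/(\alpha_i-a)-1=1/(\alpha_i-a)$. This is correct and makes that half citation-free. Second, in the defect case you replace Kuhlmann's Artin--Schreier-specific theorem by Kaplansky purity plus Proposition \ref{LemmaDutta}. This keeps the argument internal to the paper but rests on the heavier Dutta--Novacoski machinery. It also needs $\alpha_i$ \emph{itself} to be pure: Proposition \ref{LemmaDutta} applied to some other pure generator $\theta$ would produce the segment $v(\sigma\theta-\theta)-D_1(\theta,K_i)$, not $-\mathbf D_i$. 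Lemma \ref{lem:defect-prime-pure} is stated only as an existence claim, but its Kaplansky argument applies to every generator, so this is fine once you say it.

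The obstacle you anticipate in the defectless case does not arise, and the Hasse-derivative estimate is unnecessary. Take $c\in K_i$ with $v(\alpha_i-c)=\max\mathbf D_i$, which exists by \cite[Lemma 2.12]{Topics}, and put $\theta=\alpha_i-c$. Let $d\in K_i$ with $vd=v\theta$ whenever $v\theta\in vK_i$.
\begin{itemize}
\item If $e(L/K_i)=p$, maximality forces $v\theta\notin vK_i$. Otherwise the residue of $\theta/d$ would lie in $Lv=K_iv$, and subtracting a lift $dc'$ would increase $v(\alpha_i-c)$.
\item If $f(L/K_i)=p$, the same maximality forces the residue of $\theta/d$ to lie outside $K_iv$. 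Hence it generates $Lv/K_iv$.
\end{itemize}
In both cases $1,\theta,\ldots,\theta^{p-1}$ is a valuation basis. Since $\sigma\theta-\theta=1$, the defectless computation in the proof of Proposition \ref{LemmaDutta} gives $v(\sigma b/b-1)\geq -v\theta=\min(-\mathbf D_i)$ for all $b\in L^\times$. This is exactly what the paper's citation of Kuhlmann's Proposition 3.14 and Theorem 3.15(1) encodes, so your fallback citation lands on the paper's argument.
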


\begin{proof}
Since $K$ is henselian, so is $K_i$, and $L/K_i$ is a unibranched
Artin--Schreier extension of degree $p$.

Suppose first that $L/K_i$ has defect.  For every nontrivial
$\sigma\in H_i$ we have
\[
\sigma(\alpha_i)-\alpha_i\in\mathbb F_p^\times,
\qquad
v(\sigma(\alpha_i)-\alpha_i)=0.
\]
Hence \cite[Theorem 3.10]{Topics} gives
\[
\left\{
 v\left(\frac{\sigma b-b}{b}\right)
 \;\middle|\; b\in L^\times
\right\}
=-D_1(\alpha_i,K_i),
\]
and therefore $I_{H_i}=I_{-\mathbf D_i}$.

Now suppose that $L/K_i$ is defectless.  By \cite[Lemma 2.12]{Topics} there
is $c\in K_i$ such that
\[
\delta:=v(\alpha_i-c)=\max D_1(\alpha_i,K_i).
\]
Put $\theta=\alpha_i-c$, which is again an Artin--Schreier generator of
$L/K_i$.  The proof of \cite[Proposition 3.14]{Topics}, together with
\cite[Theorem 3.15(1)]{Topics}, gives
\[
I_{H_i}=\left(\frac1\theta\right).
\]
Since $-\mathbf D_i$ is a final segment with least element $-\delta$, the
last ideal is precisely $I_{-\mathbf D_i}$.  This proves the formula in both
cases.
\end{proof}

The following mild condition will be used only for the global statement in
Corollary \ref{linealrydisoj}.

\begin{Def}
We say that a valued field $(K,v)$ of positive characteristic $p$ satisfies
\textbf{(GE)} if for every $n\in\mathbb N$ there exist
$c_1,\ldots,c_n\in K$ such that
\begin{equation}\label{equationinte}
v(a_1c_1+\cdots+a_nc_n)=0
\end{equation}
for every
$(a_1,\ldots,a_n)\in\mathbb F_p^n\setminus\{(0,\ldots,0)\}$.
\end{Def}

\begin{Obs}\label{obs:GE-algebraic-subfield}
If $K$ contains an infinite subfield $K_0$ algebraic over $\mathbb F_p$, then
$(K,v)$ satisfies \textbf{(GE)}.
\end{Obs}

\begin{proof}
Choose $c_1,\ldots,c_n\in K_0$ linearly independent over $\mathbb F_p$.
Every nonzero element $c\in K_0$ belongs to a finite field, so $c$ has finite
multiplicative order.  Hence $v(c)=0$.  Every nonzero $\mathbb F_p$-linear
combination of the $c_i$ is therefore a nonzero element of $K_0$ and has
value zero, which is exactly \eqref{equationinte}.
\end{proof}

For the next corollary we formulate the independent-defect hypothesis directly
in terms of the distance sets used throughout this paper.  We say that
$D_1(\alpha,K)$ is \emph{cofinal below $0$} if, for every $\delta<0$, there
exists $a\in K$ such that
\[
\delta<v(\alpha-a)<0.
\]
In the rank-one terminology of \cite{Josnei}, this is the condition written
$d_1(\alpha)=0^-$.  Stating the condition in terms of $D_1$ avoids importing
the additional language of cuts into the present paper.

\begin{Cor}\label{linealrydisoj}
Suppose that $(K,v)$ is henselian and satisfies \textbf{(GE)}.  Let
$K(\alpha_i)/K$, $1\leq i\leq n$, be linearly disjoint Artin--Schreier
extensions, and set
\[
L=K(\alpha_1,\ldots,\alpha_n),
\qquad
\mathcal E=(L/K,v),
\]
and assume that $L/K$ is immediate.  If $D_1(\alpha_i,K)$ is cofinal below
$0$ for every $i$, then
\[
{\rm Ram}(\mathcal E)=\{\mathcal M_L\}.
\]
\end{Cor}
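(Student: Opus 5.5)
We must show every nontrivial subgroup ideal $I_H$ lies in $\mathcal M_L$, equals $\mathcal M_L$, and that $\mathcal M_L$ actually occurs. Since ideals in a valuation ring are linearly ordered and $I_H=\bigcup_{\sigma\in H}I_\sigma$, it suffices to prove $I_\sigma=\mathcal M_L$ for every $\sigma\neq1$. The plan is to prove two inclusions for each such $\sigma$: an upper bound $I_\sigma\subseteq\mathcal M_L$, i.e. $v(\sigma b/b-1)>0$ for all $b\in L^\times$, and a lower bound showing that these values are coinitial in $vL_{>0}=vK_{>0}$. The equality $vL=vK$ holds because $L/K$ is immediate.

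For the lower bound we use (GE), which is where it enters. Write $\sigma(\alpha_i)=\alpha_i+a_i$ with $(a_1,\ldots,a_n)\in\mathbb F_p^n\setminus\{0\}$. Choose $c_1,\ldots,c_n\in K$ as in (GE) and set $\theta=\sum c_i\alpha_i$. Then $\sigma\theta-\theta=\sum a_ic_i$ has value $0$.

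1. We need $\theta$ to have distances close to $0$. Since each $D_1(\alpha_i,K)$ is cofinal below $0$ and the values $v(\alpha_i-d)<0$ for $d\in K$, for any $\delta<0$ we pick $d_i\in K$ with $\delta<v(\alpha_i-d_i)<0$. Taking $d=\sum c_id_i$ and using $vc_i=0$ gives $v(\theta-d)\geq\min_i v(\alpha_i-d_i)>\delta$.
2. Then
\[
v\Bigl(\frac{\sigma(\theta-d)}{\theta-d}-1\Bigr)=-v(\theta-d),
\]
which is positive provided $v(\theta-d)<0$. This gives arbitrarily small positive values, hence $\mathcal M_L\subseteq I_\sigma$.
3. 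To check $v(\theta-d)<0$: since $\theta\notin K$ and $\wp(\theta)\in K$ up to constants, the standard Artin--Schreier argument shows $v(\theta-d)\ge 0$ would force $v(\sigma\theta-\theta)>0$ after a Hensel-type argument. Alternatively, $\theta-d$ is an Artin--Schreier-type element $\sum c_i(\alpha_i-d_i)$, and if its value were $\geq0$ we would contradict immediacy. I will handle this by noting that $\theta-d$ satisfies $(\theta-d)\cdot$unit-type identities. This is a delicate point; the first thing to try is bounding by $\min_i v(\alpha_i-d_i)<0$ and choosing the $d_i$ so that values are distinct or the $c_i$ are generic.

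For the upper bound, $I_\sigma\subseteq\mathcal M_L$, I would argue via Kuhlmann's theory. Since $L/K$ is immediate and $\sigma$ acts trivially on $Lv=Kv$, we get $v(\sigma b/b-1)>0$ for units. By the immediacy, any $b$ can be written as $b=cu$ with $c\in K$ and $u$ a unit, so $\sigma b/b=\sigma u/u$, and $\sigma u\equiv u$ modulo $\mathcal M_L$ because $\sigma$ induces the identity on the residue field $Lv=Kv$. Hence $v(\sigma b/b-1)>0$.

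Combining the two bounds gives $I_H=\mathcal M_L$ for every $H\neq1$, so ${\rm Ram}(\mathcal E)=\{\mathcal M_L\}$. The main obstacle is step 3, guaranteeing $v(\theta-d)<0$. This is exactly why (GE) is needed: it prevents cancellation among the $\alpha_i$-contributions.
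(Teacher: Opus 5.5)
Your architecture matches the paper's. You first prove $I_\sigma\subseteq\mathcal M_L$ by writing $b=cu$ with $c\in K^\times$, $u\in\VR_L^\times$, and using that $\sigma$ acts trivially on $Lv=Kv$. You then get the lower bound through $\theta=\sum c_i\alpha_i$ with (GE) coefficients, approximants $d=\sum c_id_i$, and the formula $v\bigl(\sigma(\theta-d)/(\theta-d)-1\bigr)=-v(\theta-d)$.

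The weak point is step 3. You correctly see that positivity of this value requires $v(\theta-d)<0$. However, you only list candidate strategies (a ``Hensel-type argument'', ``unit-type identities'', generic $c_i$) and carry none of them out. As written, the lower bound $\mathcal M_L\subseteq I_\sigma$ is not established.

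The missing step follows at once from what you already proved, and this is how the paper does it; that is why the paper proves the upper bound first. Your upper bound gives $v(\sigma x/x-1)>0$ for \emph{every} $x\in L^\times$, so apply it to $x=\theta-d$. Since $\sigma x-x=\sigma\theta-\theta=\sum_i a_ic_i$ has value $0$,
\[
0<v\Bigl(\frac{\sigma x}{x}-1\Bigr)=v(\sigma\theta-\theta)-v(\theta-d)=-v(\theta-d).
\]

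Your ``distinct values'' idea also works once made precise. Apply cofinality below $0$ successively to choose $d_1,\ldots,d_n\in K$ with $\delta<v(\alpha_1-d_1)<v(\alpha_2-d_2)<\cdots<v(\alpha_n-d_n)<0$. Since $v(c_i)=0$, the minimum in $\theta-d=\sum_i c_i(\alpha_i-d_i)$ is attained only at $i=1$. Hence $v(\theta-d)=v(\alpha_1-d_1)\in(\delta,0)$. Either fix completes the argument.

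Finally, your closing sentence misplaces the role of (GE). It is used only to get $v(c_i)=0$ and $v(\sigma\theta-\theta)=0$. In the paper's route, the negativity of $v(\theta-d)$ comes from immediacy, through the upper bound.
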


\begin{proof}
We first show that every nontrivial subgroup ideal is contained in
$\mathcal M_L$.  Fix $1\neq\sigma\in{\rm Gal}(L/K)$ and $b\in L^\times$.
Since $vL=vK$, choose $c\in K^\times$ with $vc=vb$ and put $u=b/c$.  Then
$u\in\VR_L^\times$.  Since $Lv=Kv$ and $\sigma$ fixes $K$, it acts trivially
on $Lv$; hence $\sigma u$ and $u$ have the same residue.  Therefore
\[
v\left(\frac{\sigma b}{b}-1\right)
 =v\left(\frac{\sigma u}{u}-1\right)>0.
\]
It follows that
\[
I_H\subseteq\mathcal M_L
\qquad
(\{1\}\neq H\leq{\rm Gal}(L/K)).
\]

Choose $c_1,\ldots,c_n\in K$ satisfying \textbf{(GE)} and set
\[
\theta=c_1\alpha_1+\cdots+c_n\alpha_n.
\]
The linear disjointness gives
\[
{\rm Gal}(L/K)\simeq C_p^n,
\]
and every $\sigma\in{\rm Gal}(L/K)$ is determined by elements
$a_1,\ldots,a_n\in\mathbb F_p$ through
\[
\sigma(\alpha_i)=\alpha_i+a_i.
\]
If $\sigma\neq1$, then $(a_1,\ldots,a_n)\neq0$, and \textbf{(GE)} yields
\begin{equation}\label{eq:GE-conjugate-difference}
v(\sigma\theta-\theta)
 =v(a_1c_1+\cdots+a_nc_n)=0.
\end{equation}

Now fix $\delta<0$.  Since each $D_1(\alpha_i,K)$ is cofinal below $0$, choose
$b_i\in K$ such that
\[
v(\alpha_i-b_i)>\delta
\qquad(1\leq i\leq n).
\]
Because \textbf{(GE)} implies $v(c_i)=0$, for
\[
b=c_1b_1+\cdots+c_nb_n\in K
\]
we obtain
\[
v(\theta-b)>\delta.
\]
For a fixed nontrivial $\sigma$, applying the first part of the proof to
$\theta-b$ and using \eqref{eq:GE-conjugate-difference} gives
\[
0<
 v\left(
 \frac{\sigma(\theta-b)-(\theta-b)}{\theta-b}
 \right)
 =-v(\theta-b).
\]
As $\delta<0$ is arbitrary, these positive values are coinitial in the
positive cone of $vL$.  Hence every element of $\mathcal M_L$ belongs to
$I_\sigma$.  Thus
\[
I_\sigma=\mathcal M_L
\qquad(\sigma\neq1),
\]
and consequently $I_H=\mathcal M_L$ for every nontrivial subgroup $H$.
\end{proof}

\begin{Obs}\label{obs:independent-defect-recovery}
In the rank-one setting of \cite[Corollary 6.10]{Josnei}, the hypothesis
$d_1(\alpha_i)=0^-$ is precisely the cofinality condition above, and the
linearly disjoint compositum of the independent defect Artin--Schreier
extensions considered there is immediate.  Thus Corollary
\ref{linealrydisoj} recovers the ramification-ideal conclusion of
\cite[Corollary 6.10]{Josnei} without introducing cut notation.
\end{Obs}

\subsection{Kuhlmann's degree-$p^2$ example}\label{subsec:kuhlmann}

We now discuss the example constructed in \cite[Section 3.5]{Topics}.  It is
presented as a tower
\[
K\subset L_0\subset L
\]
of two Artin--Schreier extensions of degree $p$.  The lower step is
\[
L_0=K(\vartheta_0),
\qquad
\vartheta_0^p-\vartheta_0=s,
\]
where $v(\vartheta_0-K)$ has no largest element.  Thus $L_0/K$ is immediate
and has defect.  The upper step is
\[
L=L_0(\vartheta),
\qquad
\vartheta^p-\vartheta=\vartheta_0.
\]
Since $v\vartheta\notin vL_0$, the elements
$1,\vartheta,\ldots,\vartheta^{p-1}$ form a valuation basis for $L/L_0$;
hence this second step is defectless.  Put
\[
\gamma=-v\vartheta>0.
\]
Then
\[
I_{{\rm Gal}(L/L_0)}=\left(\frac1\vartheta\right).
\]

The Galois structure can be read directly from Appendix
\ref{app:AS-galois}.  Let $\sigma_0$ generate ${\rm Gal}(L_0/K)$, so that
\[
\sigma_0(\vartheta_0)-\vartheta_0=1.
\]
Choose $\zeta\in K$ with
\[
\zeta^p-\zeta=1,
\]
as in \cite[Section 3.5]{Topics}.  Then
\[
\sigma_0(\vartheta_0)-\vartheta_0=\wp(\zeta),
\]
so Theorem \ref{teo:AS-one-step} shows that $L/K$ is Galois.  Moreover,
\[
{\rm Tr}_{L_0/K}(\zeta)=p\zeta=0,
\]
and Observation \ref{obs:AS-trace-cor210} yields
\[
G={\rm Gal}(L/K)\simeq C_p\times C_p.
\]
Equivalently, one may choose commuting automorphisms $\sigma,\tau\in G$ with
$\tau|_{L_0}=1$ and
\[
\tau(\vartheta)-\vartheta=1,
\qquad
\sigma(\vartheta_0)-\vartheta_0=1,
\qquad
\sigma(\vartheta)-\vartheta=\zeta.
\]

The subgroups of order $p$ are
\[
\langle\tau\rangle,
\qquad
\langle\sigma\tau^i\rangle
\quad(0\leq i\leq p-1).
\]
For $1\leq k\leq p-1$,
\[
(\sigma\tau^i)^k(\vartheta)-\vartheta=k\zeta+ik
\]
has value $0$, and therefore
\[
v\left(
\frac{(\sigma\tau^i)^k\vartheta}{\vartheta}-1
\right)
=-v\vartheta=\gamma.
\]
Using the valuation basis above, Kuhlmann proves that
\[
I_{\langle\sigma\tau^i\rangle}
 =I_{\langle\tau\rangle}
 =\left(\frac1\vartheta\right)
\]
for every $i$, and hence
\[
I_G=\left(\frac1\vartheta\right).
\]
Thus all nontrivial subgroup ideals coincide with the same principal ideal.
In particular, \cite[Proposition 3.22]{Topics} gives a Galois extension of
degree $p^2$ with nontrivial defect but only one ramification ideal, and that
ideal is principal.

For our purposes, the example has a further consequence.  Since all
nontrivial subgroup ideals coincide, Proposition
\ref{prop:kuhlmann-duality} shows that there is only one nontrivial
ideal-indexed higher ramification group.  Hence every basis of the
two-dimensional $\mathbb F_p$-vector space $G$ is ramification-adapted.  By
Corollary \ref{cor:adaptedprincipality}, every adapted factor $H_i$ gives a
defectless degree-$p$ extension $L/K_{H_i}$, although $L/K$ itself has
defect.  Therefore defectlessness of all adapted top degree-$p$ slices does
not force defectlessness of the full extension, and Question
\ref{Quesaboutramif} has a negative answer even in the
ramification-adapted setting.

\section{Final remarks and open problems}
The results above separate two phenomena which were conflated in the
original conjecture. Theorem \ref{mainresultsofar} shows that the ideals
coming from pure factors always provide a lower bound, but Example
\ref{exa:counterexample} shows that equal minimal levels may cancel and
produce a strictly deeper ramification ideal. The obstruction is not defect:
it already occurs over a complete discretely valued field.

For elementary abelian $p$-extensions, the ramification filtration gives the
correct organizing principle. Theorem \ref{teo:adapted} and Corollary
\ref{cor:adaptedexists} show that one can always choose cyclic factors so
that every subgroup ideal is represented by one of the factors.  When these
adapted factors are pure, Theorem \ref{teo:adapted} further expresses the
factor ideals by the distance-set data in $(*)$.  Independently of purity,
Corollary \ref{cor:adaptedprincipality} identifies exactly what principality
sees in an adapted decomposition: all ramification ideals are principal if
and only if all the top degree-$p$ extensions $L/K_{H_i}$ are defectless. Kuhlmann's
example shows that this does not force $L/K$ to be defectless. Thus a
remaining problem suggested by Question \ref{Quesaboutramif} is to find
additional hypotheses under which defectlessness of all adapted top slices
does imply defectlessness of the full extension.

There is also a natural purity question already in the elementary abelian
case.  Corollary \ref{cor:adaptedexists} produces a ramification-adapted
basis. By Lemma \ref{lem:defect-prime-pure}, every adapted order-$p$ factor
which has defect is automatically pure. Thus the only possible obstruction to
a ramification-adapted decomposition by pure factors comes from the defectless
degree-$p$ factors. It is therefore natural to ask for conditions under which
these defectless factors can also be chosen pure. Under such a hypothesis, the
full distance-set conclusion of Conjecture \ref{conjecturenaosei} follows
from Theorem \ref{teo:adapted}.

It would also be interesting to extend the adapted-decomposition statement
to more general $p$-groups which are products of pure subgroups. A natural
problem is to identify a group- and filtration-theoretic condition on a
family of pure subgroups $H_1,\ldots,H_r$ ensuring that every ideal-indexed higher
ramification group is generated by an appropriate subfamily and that every
ramification ideal is realized by one factor.

A second problem is to understand when the lower bound of Theorem
\ref{mainresultsofar} is sharp without assuming a strict minimum. The
counterexample shows that this question is controlled by cancellation in
the associated graded ring and is reflected in the position of the chosen
factors relative to the ramification filtration. This suggests studying a
refined invariant recording the initial forms of the contributions in
\eqref{veryimport}.

\appendix
\section{A recursive Galois criterion for Artin--Schreier towers}
\label{app:AS-galois}

In \cite[Lemma 2.9]{Nart}, a criterion is given for a tower of two
Artin--Schreier extensions to be Galois over the bottom field.  We record
here a direct extension of that criterion to an arbitrary Galois
$p$-extension, and then apply it recursively to Artin--Schreier towers.  This
appendix is purely field-theoretic; no valuation-theoretic hypothesis is
needed.

Throughout the appendix, $\operatorname{char}(K)=p>0$ and
\[
\wp(x)=x^p-x.
\]
For a field $F$ of characteristic $p$, we write
\[
\wp(F)=\{x^p-x\mid x\in F\}.
\]

\begin{Teo}\label{teo:AS-one-step}
Let $L/K$ be a finite Galois $p$-extension and put
\[
G={\rm Gal}(L/K).
\]
Let
\[
M=L(\eta),\qquad \wp(\eta)=b\in L\setminus\wp(L),
\]
so that $[M:L]=p$. Then the following conditions are equivalent:
\begin{enumerate}
\item[(i)] $M/K$ is Galois;
\item[(ii)]
\[
\sigma(b)-b\in\wp(L)
\qquad\text{for every }\sigma\in G;
\]
\item[(iii)] the class $[b]\in L/\wp(L)$ is fixed by $G$.
\end{enumerate}
\end{Teo}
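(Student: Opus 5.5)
The plan is to show (ii)$\Leftrightarrow$(iii) directly and then prove (i)$\Leftrightarrow$(ii) by working inside a fixed algebraic closure $\overline K\supseteq M$.

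\emph{(ii)$\Leftrightarrow$(iii).} Since $G$ acts additively on $L$ and $\sigma(\wp(L))=\wp(L)$, there is an induced action of $G$ on $L/\wp(L)$. The statement $\sigma[b]=[b]$ says exactly that $\sigma(b)-b\in\wp(L)$, so (ii) and (iii) are the same condition.

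\emph{(i)$\Rightarrow$(ii).} Assume $M/K$ is Galois and take $\sigma\in G$. Extend $\sigma$ to $\tilde\sigma\in{\rm Gal}(M/K)$. Then $\wp(\tilde\sigma\eta)=\sigma(b)$, and $\tilde\sigma\eta\in M$. By Artin--Schreier theory, $M=L(\eta)$ corresponds to the $\mathbb F_p$-line spanned by $[b]$ in $L/\wp(L)$, namely $\{c\in L\mid \wp(c)\text{ has a root in }M\}=\wp(L)+\mathbb F_p b$. Therefore $\sigma(b)\in\mathbb F_p^\times b+\wp(L)$, say $\sigma(b)\equiv \chi(\sigma)b$.

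The map $\chi:G\to\mathbb F_p^\times$ is a character. Its order divides $p-1$, while $G$ is a $p$-group, so $\chi$ is trivial. This gives (ii). This step is the one needing care: it is where the $p$-group hypothesis is used, and without it (ii) would hold only up to a scalar.

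\emph{(ii)$\Rightarrow$(i).} I will show that $M$ is stable under every $K$-embedding $\phi:M\to\overline K$. Because $L/K$ is normal, $\phi|_L=\sigma\in G$. Write $\sigma(b)=b+\wp(c_\sigma)$ with $c_\sigma\in L$. Then
\[
\wp(\phi\eta)=\sigma(b)=\wp(\eta+c_\sigma),
\]
so $\phi\eta=\eta+c_\sigma+k$ for some $k\in\mathbb F_p$. Hence $\phi\eta\in M$, and $\phi(M)=M$, so $M/K$ is normal.

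For separability, note that $L/K$ is separable and $\eta$ is a root of the separable polynomial $X^p-X-b$. Thus $M/K$ is Galois.
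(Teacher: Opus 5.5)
Your proof is correct. Your (ii)$\Rightarrow$(i) is the paper's argument: a $K$-embedding $\phi$ of $M$ restricts to some $\sigma\in G$, and since $\eta+c_\sigma\in M$ is a root of $X^p-X-\sigma(b)$, every root lies in $M$. You also make separability explicit, which the paper leaves tacit.

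For (i)$\Rightarrow$(ii) you take a different route. The paper argues on the group side. $N={\rm Gal}(M/L)=\langle\tau\rangle\simeq C_p$ is normal in the $p$-group $\widetilde G={\rm Gal}(M/K)$, and ${\rm Aut}(C_p)$ has order $p-1$, so the conjugation action is trivial. Hence any lift $\widetilde\sigma$ commutes with $\tau$, so $\beta_\sigma=\widetilde\sigma(\eta)-\eta$ is $\tau$-invariant, lies in $L$, and satisfies $\sigma(b)-b=\wp(\beta_\sigma)$. You argue on the Kummer side instead: from $\wp(M)\cap L=\wp(L)+\mathbb F_p b$ you get $\sigma[b]=\chi(\sigma)[b]$, and you kill the character $\chi\colon G\to\mathbb F_p^\times$ because $G$ is a $p$-group. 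These are the same obstruction seen from dual sides; one checks that $\widetilde\sigma\tau\widetilde\sigma^{-1}=\tau^{\chi(\sigma)^{-1}}$.

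What each buys:
\begin{itemize}
\item The paper's version is self-contained. It also directly produces the elements $\beta_\sigma\in L$ that are reused for the lifts in Proposition~\ref{prop:AS-cocycle}.
\item Yours relies on the standard description of $\wp(M)\cap L$, whose proof is itself the $\tau$-fixed-point trick. In exchange, it shows transparently that normality alone only forces the line $\mathbb F_p[b]$ to be $G$-stable. The $p$-group hypothesis is needed only to make the scalar trivial.
\end{itemize}

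Two small points to tidy. First, the set you mean is $\{c\in L\mid X^p-X-c\text{ has a root in }M\}$, not ``$\wp(c)$ has a root''. Second, you should justify that $\chi$ is a well-defined homomorphism:
\begin{itemize}
\item $[b]\neq0$ has order $p$ in $L/\wp(L)$, so the scalar is unique;
\item $\chi(\sigma)\neq0$ because $\sigma(b)\notin\wp(L)$;
\item multiplicativity follows from the $\mathbb F_p$-linearity of $\sigma$ together with $\sigma(\wp(L))=\wp(L)$.
\end{itemize}
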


\begin{proof}
The equivalence between (ii) and (iii) is immediate.  We prove the
Galois criterion.

Assume first that $M/K$ is Galois.  Let
\[
\widetilde G={\rm Gal}(M/K),\qquad
N={\rm Gal}(M/L)=\langle\tau\rangle,
\]
where
\[
\tau(\eta)=\eta+1.
\]
Since both $L/K$ and $M/L$ have $p$-power degree, $\widetilde G$ is a
$p$-group.  The conjugation action of $\widetilde G$ on
$N\simeq C_p$ is trivial, because
\[
{\rm Aut}(C_p)\simeq\mathbb F_p^\times
\]
has order $p-1$.  Now fix $\sigma\in G$ and choose a lift
$\widetilde\sigma\in\widetilde G$.  Since $\widetilde\sigma$ commutes with
$\tau$, we have
\[
\tau\bigl(\widetilde\sigma(\eta)-\eta\bigr)
=\widetilde\sigma(\eta)-\eta.
\]
Thus
\[
\beta_\sigma:=\widetilde\sigma(\eta)-\eta\in L.
\]
Applying $\wp$, we obtain
\[
\sigma(b)-b
=\wp\bigl(\widetilde\sigma(\eta)\bigr)-\wp(\eta)
=\wp(\beta_\sigma)\in\wp(L).
\]
This proves (i)$\Rightarrow$(ii).

Conversely, assume (ii).  For every $\sigma\in G$, choose
$\beta_\sigma\in L$ such that
\[
\wp(\beta_\sigma)=\sigma(b)-b.
\]
Then the assignment
\[
\widetilde\sigma|_L=\sigma,
\qquad
\widetilde\sigma(\eta)=\eta+\beta_\sigma
\]
defines a $K$-embedding of $M$ into itself, because
\[
\wp(\eta+\beta_\sigma)=b+\wp(\beta_\sigma)=\sigma(b).
\]
Now let $\varphi:M\longrightarrow\overline K$ be any $K$-embedding.
Since $L/K$ is normal, $\varphi(L)=L$ and
$\varphi|_L=\sigma$ for some $\sigma\in G$.  Hence $\varphi(\eta)$ is a
root of
\[
X^p-X-\sigma(b).
\]
One such root, namely $\eta+\beta_\sigma$, belongs to $M$, and all the
roots differ from it by elements of $\mathbb F_p\subset K$.  Therefore
$\varphi(\eta)\in M$, and consequently $\varphi(M)=M$.  Thus $M/K$ is
normal, and hence Galois.
\end{proof}

\begin{Obs}\label{obs:Nart-Lemma29}
When $[L:K]=p$ and $G=\langle\sigma\rangle$, Theorem
\ref{teo:AS-one-step} reduces exactly to the criterion of
\cite[Lemma 2.9]{Nart}:
\[
M/K\text{ is Galois}
\quad\Longleftrightarrow\quad
\sigma(b)-b\in\wp(L).
\]
Thus Theorem \ref{teo:AS-one-step} may be viewed as the natural
one-step generalization of that lemma from $C_p$ to an arbitrary Galois
$p$-extension.
\end{Obs}

\begin{Cor}\label{cor:AS-generators}
In the situation of Theorem \ref{teo:AS-one-step}, let
$\Sigma\subseteq G$ be any generating set.  Then
\[
M/K\text{ is Galois}
\quad\Longleftrightarrow\quad
\sigma(b)-b\in\wp(L)
\quad\text{for every }\sigma\in\Sigma.
\]
\end{Cor}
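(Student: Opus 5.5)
The plan is to reduce the statement to Theorem \ref{teo:AS-one-step}, whose condition (ii) requires $\sigma(b)-b\in\wp(L)$ for every $\sigma\in G$. The forward implication is immediate: if $M/K$ is Galois, then (i)$\Rightarrow$(ii) of that theorem gives the condition for all $\sigma\in G$, in particular for $\sigma\in\Sigma$. The substance is the converse: the set
\[
S:=\{\sigma\in G\mid \sigma(b)-b\in\wp(L)\}
\]
is a subgroup of $G$. Once this is known, $\Sigma\subseteq S$ forces $S=G$, and (ii)$\Rightarrow$(i) of Theorem \ref{teo:AS-one-step} finishes the proof.

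To show $S$ is a subgroup, first note that $\wp$ is additive in characteristic $p$, so $\wp(L)$ is an additive subgroup of $L$. Also each $\rho\in G$ maps $\wp(L)$ into itself, because $\rho(x^p-x)=\rho(x)^p-\rho(x)$ and $\rho(L)=L$. Clearly $1\in S$. For $\sigma,\rho\in S$ we write the cocycle identity
\[
\sigma\rho(b)-b=\sigma\bigl(\rho(b)-b\bigr)+\bigl(\sigma(b)-b\bigr).
\]
The first summand is $\sigma$ applied to an element of $\wp(L)$, so it lies in $\wp(L)$. The second summand lies in $\wp(L)$ since $\sigma\in S$. Hence $\sigma\rho\in S$.

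Because $G$ is finite, closure under multiplication together with $1\in S$ already makes $S$ a subgroup: every $\sigma$ satisfies $\sigma^{-1}=\sigma^{n-1}$ for its order $n$. Alternatively, one can apply the identity to $\sigma^{-1}\sigma=1$.

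No step here is a real obstacle. The only point needing care is checking that $G$ preserves $\wp(L)$, which uses that $L/K$ is normal, so that each $\sigma\in G$ maps $L$ onto itself. Equivalently, in the language of condition (iii), the stabilizer of the class $[b]\in L/\wp(L)$ under the $G$-action is a subgroup, and it contains $\Sigma$, so it is all of $G$.
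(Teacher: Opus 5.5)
Your proof is correct and matches the paper's: it also shows that $\{\sigma\in G\mid \sigma(b)-b\in\wp(L)\}$ is a subgroup via the identity $\sigma\rho(b)-b=\sigma(\rho(b)-b)+(\sigma(b)-b)$, then invokes Theorem \ref{teo:AS-one-step}. The only difference is cosmetic: the paper handles inverses with the explicit formula $\sigma^{-1}(b)-b=-\sigma^{-1}(\sigma(b)-b)$, which is your alternative of applying the identity to $\sigma^{-1}\sigma=1$, rather than relying on finiteness of $G$.
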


\begin{proof}
Set
\[
C=\{\sigma\in G\mid \sigma(b)-b\in\wp(L)\}.
\]
The set $C$ is a subgroup of $G$.  Indeed, if $\sigma,\rho\in C$, then
\[
\sigma\rho(b)-b
=\sigma\bigl(\rho(b)-b\bigr)+\bigl(\sigma(b)-b\bigr)\in\wp(L),
\]
because $\wp(L)$ is stable under $G$.  Also,
\[
\sigma^{-1}(b)-b
=-\sigma^{-1}\bigl(\sigma(b)-b\bigr)\in\wp(L).
\]
Hence $C=G$ if and only if it contains a generating set of $G$, and the
claim follows from Theorem \ref{teo:AS-one-step}.
\end{proof}

We can now iterate this criterion along a tower.

\begin{Teo}\label{teo:AS-tower-recursive}
Let
\[
K=K_0\subset K_1\subset\cdots\subset K_n
\]
be a tower such that
\[
K_i=K_{i-1}(\alpha_i),\qquad
\wp(\alpha_i)=a_i\in K_{i-1}\setminus\wp(K_{i-1}),
\qquad 1\leq i\leq n.
\]
Then every extension $K_i/K$ is Galois if and only if the following
recursive condition holds: starting with
\[
G_1={\rm Gal}(K_1/K)\simeq C_p,
\]
for every $i=2,\ldots,n$, after $K_{i-1}/K$ has been shown to be Galois,
we have
\begin{equation}\label{eq:recursive-AS-Galois}
\sigma(a_i)-a_i\in\wp(K_{i-1})
\qquad\text{for every }\sigma\in G_{i-1}:={\rm Gal}(K_{i-1}/K).
\end{equation}
Equivalently,
\[
[a_i]\in
\bigl(K_{i-1}/\wp(K_{i-1})\bigr)^{G_{i-1}}
\qquad (2\leq i\leq n).
\]
Moreover, at each stage it is enough to verify
\eqref{eq:recursive-AS-Galois} on any generating set of $G_{i-1}$.
\end{Teo}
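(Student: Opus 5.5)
The statement follows by induction on $n$, applying Theorem \ref{teo:AS-one-step} (and Corollary \ref{cor:AS-generators}) at each stage of the tower. For $i=1$ there is nothing to check: $K_1=K(\alpha_1)$ with $a_1\in K\setminus\wp(K)$ is an Artin--Schreier extension. Its roots $\alpha_1+c$, $c\in\mathbb F_p$, all lie in $K_1$, so $K_1/K$ is Galois with group $G_1\simeq C_p$. The hypothesis $a_i\notin\wp(K_{i-1})$ makes each step $K_i/K_{i-1}$ have degree exactly $p$. Hence, whenever $K_{i-1}/K$ is Galois, it is a Galois $p$-extension of degree $p^{i-1}$. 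We are then precisely in the situation of Theorem \ref{teo:AS-one-step}, with $L=K_{i-1}$, $M=K_i$, $\eta=\alpha_i$ and $b=a_i$.

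For the forward direction, assume every $K_i/K$ is Galois. Fix $i\geq2$. Since $K_{i-1}/K$ is Galois, Theorem \ref{teo:AS-one-step}, (i)$\Rightarrow$(ii), applied to $M=K_i$ gives \eqref{eq:recursive-AS-Galois} for every $\sigma\in G_{i-1}$. This is the recursive condition.

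For the converse, assume the recursive condition. We show by induction on $i$ that $K_i/K$ is Galois. The case $i=1$ was handled above. If $K_{i-1}/K$ is Galois, then the recursive condition at stage $i$ is meaningful and gives \eqref{eq:recursive-AS-Galois}. Theorem \ref{teo:AS-one-step}, (ii)$\Rightarrow$(i), then shows that $K_i/K$ is Galois. The point needing care is that the condition at stage $i$ refers to $G_{i-1}$, so it can only be invoked once the previous stage is known to be Galois. The induction order handles this, and it is the only delicate point in the argument.

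The equivalent formulation $[a_i]\in\bigl(K_{i-1}/\wp(K_{i-1})\bigr)^{G_{i-1}}$ is the equivalence (ii)$\Leftrightarrow$(iii) of Theorem \ref{teo:AS-one-step}. Here $G_{i-1}$ acts on $K_{i-1}/\wp(K_{i-1})$ because $\wp$ commutes with field automorphisms, so $\wp(K_{i-1})$ is $G_{i-1}$-stable. Finally, by Corollary \ref{cor:AS-generators}, at each stage it is enough to verify \eqref{eq:recursive-AS-Galois} on a generating set of $G_{i-1}$: the set of $\sigma\in G_{i-1}$ satisfying it is a subgroup, so it equals $G_{i-1}$ as soon as it contains a generating set. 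No step poses a real obstacle; the argument consists entirely of applying the one-step theorem stage by stage.
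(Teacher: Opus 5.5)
Your proposal is correct and follows the paper's own argument: induction along the tower, applying Theorem \ref{teo:AS-one-step} with $L=K_{i-1}$, $M=K_i$, $b=a_i$ at each stage, and Corollary \ref{cor:AS-generators} for the generating-set clause. You also spell out two points the paper leaves implicit: that each $K_{i-1}/K$ is a Galois $p$-extension, so the one-step theorem applies, and that the class formulation is just the equivalence (ii)$\Leftrightarrow$(iii).
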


\begin{proof}
The extension $K_1/K$ is Artin--Schreier, hence Galois with group $C_p$.
Assume inductively that $K_{i-1}/K$ is Galois.  Apply Theorem
\ref{teo:AS-one-step} to
\[
L=K_{i-1},\qquad M=K_i,\qquad b=a_i.
\]
It yields
\[
K_i/K\text{ is Galois}
\quad\Longleftrightarrow\quad
\sigma(a_i)-a_i\in\wp(K_{i-1})
\quad\text{for every }\sigma\in G_{i-1}.
\]
Induction proves the first assertion, and Corollary
\ref{cor:AS-generators} gives the last one.
\end{proof}

\begin{Exa}\label{exa:AS-degree-p3-test}
Consider three Artin--Schreier layers
\[
K_1=K(\alpha),\qquad \wp(\alpha)=a\in K,
\]
\[
K_2=K_1(\eta),\qquad \wp(\eta)=b\in K_1,
\]
\[
K_3=K_2(\theta),\qquad \wp(\theta)=c\in K_2.
\]
Let $\sigma(\alpha)=\alpha+1$.  By \cite[Lemma 2.9]{Nart},
\[
K_2/K\text{ is Galois}
\quad\Longleftrightarrow\quad
\sigma(b)-b\in\wp(K_1).
\]
Assume this condition holds, choose $\beta\in K_1$ with
\[
\wp(\beta)=\sigma(b)-b,
\]
and choose a lift $\widetilde\sigma$ such that
\[
\widetilde\sigma(\eta)=\eta+\beta.
\]
Let $\tau\in{\rm Gal}(K_2/K)$ be determined by
\[
\tau|_{K_1}=1,\qquad \tau(\eta)=\eta+1.
\]
Then $\widetilde\sigma$ and $\tau$ generate ${\rm Gal}(K_2/K)$, and
Theorem \ref{teo:AS-tower-recursive} gives
\[
K_3/K\text{ is Galois}
\]
if and only if
\[
\widetilde\sigma(c)-c\in\wp(K_2)
\qquad\text{and}\qquad
\tau(c)-c\in\wp(K_2).
\]
This is the first case in which more than one compatibility condition may
be needed at the next level of the tower.
\end{Exa}

The elements used to solve the equations in Theorem
\ref{teo:AS-one-step} also encode the structure of the resulting Galois
group.

\begin{Prop}\label{prop:AS-cocycle}
Assume the equivalent conditions of Theorem \ref{teo:AS-one-step}.  For
each $\sigma\in G$, choose $\beta_\sigma\in L$ satisfying
\[
\wp(\beta_\sigma)=\sigma(b)-b,
\]
and take $\beta_1=0$.  Then
\[
c(\sigma,\rho)
:=\beta_\sigma+\sigma(\beta_\rho)-\beta_{\sigma\rho}
\]
belongs to $\mathbb F_p$ for every $\sigma,\rho\in G$.  Moreover, $c$
satisfies the cocycle identity
\[
c(\sigma,\rho)+c(\sigma\rho,\upsilon)
=c(\sigma,\rho\upsilon)+c(\rho,\upsilon)
\]
after the natural action on $\mathbb F_p$, which is trivial in the present
$p$-group situation.  The corresponding constants describe the central
extension
\[
1\longrightarrow C_p
\longrightarrow {\rm Gal}(M/K)
\longrightarrow G
\longrightarrow 1.
\]
\end{Prop}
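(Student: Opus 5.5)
We need to prove Proposition \ref{prop:AS-cocycle}: $c(\sigma,\rho)\in\mathbb F_p$, the cocycle identity, and the description of the central extension.

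\emph{Step 1: $c(\sigma,\rho)\in\mathbb F_p$.} Since $\wp$ is additive and commutes with $\sigma$,
\[
\wp(c(\sigma,\rho))=\bigl(\sigma(b)-b\bigr)+\sigma\bigl(\rho(b)-b\bigr)-\bigl(\sigma\rho(b)-b\bigr)=0 .
\]
The kernel of $\wp$ in $L$ is $\mathbb F_p$, so $c(\sigma,\rho)\in\mathbb F_p$.

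\emph{Step 2: the cocycle identity.} Expanding both sides with the definition of $c$ gives
\[
\beta_\sigma+\sigma\beta_\rho-\beta_{\sigma\rho}+\beta_{\sigma\rho}+\sigma\rho\beta_\upsilon-\beta_{\sigma\rho\upsilon}
\]
on the left and
\[
\beta_\sigma+\sigma\beta_{\rho\upsilon}-\beta_{\sigma\rho\upsilon}+\beta_\rho+\rho\beta_\upsilon-\beta_{\rho\upsilon}
\]
on the right. Their difference is $\sigma\bigl(\beta_\rho+\rho\beta_\upsilon-\beta_{\rho\upsilon}\bigr)-c(\rho,\upsilon)=\sigma(c(\rho,\upsilon))-c(\rho,\upsilon)$. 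This vanishes because $c(\rho,\upsilon)\in\mathbb F_p\subseteq K$ is fixed by $\sigma$. That is exactly the statement that the action on $\mathbb F_p$ is trivial. Note also that $\beta_1=0$ gives the normalisation $c(1,\rho)=c(\sigma,1)=0$.

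\emph{Step 3: the group extension.} Following the proof of Theorem \ref{teo:AS-one-step}, define $\widetilde\sigma\in{\rm Gal}(M/K)$ by $\widetilde\sigma|_L=\sigma$ and $\widetilde\sigma(\eta)=\eta+\beta_\sigma$. Let $\tau$ be the generator of $N={\rm Gal}(M/L)$ with $\tau(\eta)=\eta+1$. Then every element of ${\rm Gal}(M/K)$ is uniquely of the form $\tau^a\widetilde\sigma$ with $a\in\mathbb F_p$, since $M/K$ is Galois of degree $p|G|$ and $\tau^a\widetilde\sigma(\eta)=\eta+\beta_\sigma+a$. Next compute
\[
\widetilde\sigma\widetilde\rho(\eta)=\widetilde\sigma(\eta+\beta_\rho)=\eta+\beta_\sigma+\sigma(\beta_\rho)=\eta+\beta_{\sigma\rho}+c(\sigma,\rho).
\]
Hence $\widetilde\sigma\widetilde\rho=\tau^{c(\sigma,\rho)}\widetilde{\sigma\rho}$, since both sides restrict to $\sigma\rho$ on $L$. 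Also $\tau$ commutes with every $\widetilde\sigma$: both $\tau\widetilde\sigma$ and $\widetilde\sigma\tau$ send $\eta\mapsto\eta+\beta_\sigma+1$ and agree on $L$. So $N\simeq C_p$ is central, the sequence $1\to C_p\to{\rm Gal}(M/K)\to G\to1$ is exact, and ${\rm Gal}(M/K)$ is the central extension of $G$ by $\mathbb F_p$ determined by the $2$-cocycle $c$.

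Changing the choices $\beta_\sigma\mapsto\beta_\sigma+e_\sigma$ with $e_\sigma\in\mathbb F_p$ changes $c$ by the coboundary of $e$, so the cohomology class of $c$ is independent of the choices. No step is a real obstacle. The only point requiring care is the bookkeeping in Step 3: the direction of the composition and the fact that the restriction to $L$ together with the image of $\eta$ determines an automorphism of $M$.
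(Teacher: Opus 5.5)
Your proof is correct and follows the paper's argument: the same computation $\wp(c(\sigma,\rho))=0$, expansion of the cocycle identity, and the lift computation $\widetilde\sigma\widetilde\rho(\eta)=\eta+\beta_{\sigma\rho}+c(\sigma,\rho)$. You also supply details the paper leaves implicit: the expanded difference is $\sigma(c(\rho,\upsilon))-c(\rho,\upsilon)$, so Step 1 is needed for the cocycle identity. You further make explicit the group law $\widetilde\sigma\widetilde\rho=\tau^{c(\sigma,\rho)}\widetilde{\sigma\rho}$, the centrality of $\tau$, and the fact that the cohomology class does not depend on the choice of the $\beta_\sigma$.
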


\begin{proof}
Since $\wp$ is additive and commutes with the action of $G$,
\[
\begin{aligned}
\wp\bigl(c(\sigma,\rho)\bigr)
&=\wp(\beta_\sigma)
 +\sigma\bigl(\wp(\beta_\rho)\bigr)
 -\wp(\beta_{\sigma\rho})\\
&=(\sigma(b)-b)
 +\sigma(\rho(b)-b)
 -(\sigma\rho(b)-b)=0.
\end{aligned}
\]
Hence $c(\sigma,\rho)\in\ker(\wp)=\mathbb F_p$.  The cocycle identity
follows by expanding both sides.  Finally, if the lift of $\sigma$ is
defined by
\[
\widetilde\sigma(\eta)=\eta+\beta_\sigma,
\]
then
\[
\widetilde\sigma\widetilde\rho(\eta)
=\eta+\beta_{\sigma\rho}+c(\sigma,\rho).
\]
Thus $c(\sigma,\rho)$ records the discrepancy between the product of the
chosen lifts and the chosen lift of $\sigma\rho$, which is precisely the
extension data for the central kernel ${\rm Gal}(M/L)\simeq C_p$.
\end{proof}

\begin{Obs}\label{obs:AS-trace-cor210}
The preceding proposition recovers the trace appearing in
\cite[Corollary 2.10]{Nart}.  Suppose, more generally, that
\[
G={\rm Gal}(L/K)=\langle\sigma\rangle\simeq C_{p^m}
\]
and choose $\beta\in L$ with
\[
\wp(\beta)=\sigma(b)-b.
\]
After choosing a lift with
\[
\widetilde\sigma(\eta)=\eta+\beta,
\]
one obtains
\[
\widetilde\sigma^{p^m}(\eta)
=\eta+\operatorname{Tr}_{L/K}(\beta).
\]
As in the proof of \cite[Corollary 2.10]{Nart}, the trace belongs to
$\mathbb F_p$.  Hence
\[
\operatorname{Tr}_{L/K}(\beta)\neq0
\]
if and only if $\widetilde\sigma$ has order $p^{m+1}$, in which case
$\operatorname{Gal}(M/K)\simeq C_{p^{m+1}}$.  If the trace is zero, then
\[
\operatorname{Gal}(M/K)\simeq C_{p^m}\times C_p.
\]
For $m=1$, this is exactly \cite[Corollary 2.10]{Nart}.
\end{Obs}

\begin{Obs}\label{obs:AS-final-only}
Theorem \ref{teo:AS-tower-recursive} characterizes towers which are Galois
over $K$ at every intermediate stage.  If one asks only whether the top
field $K_n/K$ is Galois, while allowing some intermediate $K_i/K$ to be
non-normal, the recursive criterion above is no longer applicable as
stated: a Galois $p$-extension can have non-normal intermediate fields.  In
that situation one must keep track of the conjugates of the intermediate
fields, or equivalently of the action on the corresponding embeddings.
\end{Obs}

\section*{Declaration of generative AI and AI-assisted technologies in the manuscript preparation process}

During the preparation of this work, the author used OpenAI ChatGPT as an auxiliary tool for mathematical discussion, including exploring alternative proof strategies, as well as for improving the organization, clarity, and language of the manuscript and identifying potentially relevant references. Every argument, computation, citation, and mathematical conclusion appearing in the final manuscript was independently checked and approved by the author, who takes full responsibility for the content of the article.

\end{document}